\newtheorem{assumption}[theorem]{Assumption}
\newcommand{\be}{\begin{equation}}
\newcommand{\ee}{\end{equation}}
\newcommand{\bee}{\begin{equation*}}
\newcommand{\eee}{\end{equation*}}
\newcommand{\bea}{\begin{eqnarray}}
\newcommand{\eea}{\end{eqnarray}}
\newcommand{\beaa}{\begin{eqnarray*}}
\newcommand{\eeaa}{\end{eqnarray*}}
\newcommand{\minimize}{\mathop{\textrm{min}}}
\newcommand{\st}{\,\textrm{subject to}\,}  
\newcommand{\R}{\mathbb{R}}  
\newcommand{\cC}{\mathcal{C}}  
\newcommand{\cD}{\mathcal{D}}  
\newcommand{\cF}{\mathcal{F}}  
\newcommand{\cK}{\mathcal{K}}  
\newcommand{\cI}{\mathcal{I}}  
\newcommand{\cS}{{\mathcal{S}}}
\newcommand{\cO}{{\mathcal{O}}}
\newcommand{\cP}{{\mathcal{P}}}
\newcommand{\bA}{{D}}
\newcommand{\ind}{\mathrm{1}}
\newcommand{\prox}{\mathbf{prox}} 
\newcommand{\iprod}[2]{\left\langle{#1},{#2}\right\rangle}
\newcommand{\dist}{\mathrm{dist}} 
\newcommand{\sign}{\mathrm{sign}} 
\newcommand{\dct}{\mathrm{dct}}
\DeclareMathOperator*{\argmin}{arg\,min}
\title{A Regularized Semi-Smooth Newton Method With Projection Steps for  Composite Convex Programs\thanks{The first version was titled ``Semi-Smooth Second-order Type Methods for Composite Convex Programs" (http://arxiv.org/abs/1603.07870). This version was submitted to ``SIAM Journal on Optimization" on July 28, 2016.}}
\author{
Xiantao Xiao\thanks{School of Mathematical Sciences, Dalian University of
Technology, Dalian, CHINA (xtxiao@dlut.edu.cn). Research supported by the Fundamental Research Funds for the Central Universities under the grant DUT16LK30.}
\and
 Yongfeng Li\thanks{School of Mathematical Sciences, Peking University, Beijing,
 CHINA (1200010733@pku.edu.cn).}  
 \and 
 Zaiwen Wen\thanks{Beijing International Center for Mathematical
Research, Peking University, Beijing, CHINA (wenzw@pku.edu.cn).
Research supported in part by NSFC grant 11322109 and by the National
Basic Research Project under the grant 2015CB856002.}
\and Liwei Zhang\thanks{School of Mathematical Sciences, Dalian University of Technology, Dalian, CHINA (lwzhang@dlut.edu.cn). Research partially supported by NSFC grants 11571059 and 91330206.}   
}
\begin{document}
\maketitle
\begin{abstract}
The goal of this paper is to study approaches to bridge the gap between first-order and second-order type methods for composite convex programs.  Our key observations are: i) Many well-known operator splitting methods, such as forward-backward splitting (FBS) and Douglas-Rachford splitting (DRS), actually define a fixed-point mapping;  ii) The optimal solutions of the composite convex program and the solutions of a system of nonlinear equations derived from the fixed-point mapping are equivalent. Solving this kind of system of nonlinear equations enables us to develop second-order type methods. Although these nonlinear equations may be non-differentiable, they are often semi-smooth and their generalized Jacobian matrix is positive semidefinite due to monotonicity.  By combining with a regularization approach and a known hyperplane projection technique, we propose an adaptive semi-smooth Newton method  and establish its convergence to global optimality. Preliminary numerical results on $\ell_1$-minimization problems demonstrate that our second-order type algorithms are able to achieve superlinear or quadratic convergence. 
\end{abstract} 
\begin{keywords}
composite convex programs, operator splitting methods, proximal mapping,
semi-smoothness, Newton method
\end{keywords}
\begin{AMS}
90C30, 65K05
\end{AMS}
\section{Introduction}\label{sec:intro}
This paper aims to solve a composite convex optimization problem in the form
\be\label{eq:P}
\minimize\limits_{x\in\R^n}\quad f(x)+h(x),
\ee
where $f$ and $h$ are extended real-valued convex functions. Problem (\ref{eq:P}) arises
from a wide variety of applications, such as signal recovery, image processing,
machine learning, data analysis, and etc. For example, it becomes the sparse
optimization problem when $f$ or $h$ equals to the $\ell_1$-norm, which attracts a
significant interest in signal or image processing in recent years. If $f$ is a
loss function associated with linear predictors and $h$ is a regularization
function, problem (\ref{eq:P}) is often referred as the regularized 
risk minimization problem  in machine learning and
statistics.  When $f$ or $h$ is an indicator function onto a convex set, problem
(\ref{eq:P}) represents a general convex constrained optimization problem.

Recently, a series of first-order methods, including the forward-backward
splitting (FBS) (also known as proximal gradient) methods, Nesterov's accelerated methods, the alternative
direction methods of
multipliers (ADMM), the Douglas-Rachford splitting (DRS) and Peaceman-Rachford
splitting (PRS) methods, have been extensively studied
and  widely used for solving a subset of problem (\ref{eq:P}). The readers are
referred to, for example, \cite{BPCPE2011,CP2011} and references therein, for a
review on some of these first-order methods. One main feature of these methods
is that they first  exploit the underlying problem structures,  then construct
subproblems that can be solved relatively efficiently. These algorithms are rather
simple yet powerful since they are easy to be implemented in many interested
applications and they often converge fast to a solution with moderate accuracy.
However,  a notorious drawback is that they may suffer from a slow tail
convergence and hence a significantly large number of iterations is needed in order
to achieve a high accuracy. 

A few Newton-type methods for some special instances of problem (\ref{eq:P}) have been
investigated to alleviate the inherent weakness of the first-order type methods.
Most existing Newton-type methods for problem (\ref{eq:P}) with
a differentiable function $f$ and a simple function $h$ whose proximal mapping
can be cheaply evaluated are based on the FBS method to some
extent. The proximal Newton method \cite{LSS2014,PSB2014} can be interpreted as a generalization of the proximal gradient
method. It updates in each iteration by a composition of the proximal mapping
with a Newton or quasi-Newton step. The semi-smooth Newton methods
proposed in \cite{GL2008,MU2014,BCNO2015} solve the nonsmooth
formulation of the optimality conditions corresponding to the FBS method. 
 In \cite{ZST2010}, the augmented Lagrangian method is applied to solve the dual
 formulation of general linear semidefinite
programming problems, where each augmented Lagrangian function is minimized by
using  the semi-smooth Newton-CG method. Similarly, a proximal point algorithm is
 developed to solve the dual problems of a class of matrix spectral norm approximation
in \cite{CLST2016}, where the subproblems are again handled by the semi-smooth Newton-CG method.

In this paper, we study a few second-order type methods for problem
(\ref{eq:P}) in a general setting even if $f$ is nonsmooth and $h$ is an indicator
function. Our key observations are that many first-order methods, such as
the FBS and DRS methods, can be written as fixed-point iterations and the
optimal solutions of \eqref{eq:P} are also the solutions of a system of
nonlinear equations  defined by the corresponding fixed-point mapping. Consequently, the concept is to develop second-order type 
algorithms based on solving the
system of nonlinear equations. Although these nonlinear equations are often
non-differentiable,  they are monotone and can be semi-smooth due to the properties of
the proximal mappings.
 We first propose a regularized semi-smooth Newton method to solve the system
 of nonlinear equations. The regularization term is important since the generalized Jacobian
 matrix corresponding to monotone equations may only be positive semidefinite.
In particular, the regularization parameter is updated by a self-adaptive strategy similar to
the trust region algorithms.
  By combining with the semi-smooth Newton step and a hyperplane projection technique,
  we show that the method converges globally to an optimal solution of problem
  (\ref{eq:P}). The hyperplane projection step is in fact indispensable for the
  convergence to global optimality and it is inspired by several iterative methods for solving monotone nonlinear equations
  \cite{SS1999,ZL2008}.  Different from the approaches in the literature, the
  hyperplane projection step is only executed when the residual of the semi-smooth
  Newton step is not reduced sufficiently. When certain conditions are
  satisfied, we prove that the semi-smooth
  Newton steps are always performed close to the optimal solutions. Consequently,
  fast local convergence rate is established. 
  For some cases, the computational cost can be further reduced if the Jacobian
  matrix is approximated by the limited memory BFGS (L-BFGS) method. 

Our main contribution is the study of some relationships between the  first-order
and second-order type methods. Our semi-smooth Newton methods are able to solve the general convex composite
problem (\ref{eq:P})  as long as a fixed-point mapping is well defined.
In particular,  our methods are applicable to  constrained convex programs, such
as constrained $\ell_1$-minimization problem. In
contrast, the Newton-type methods in \cite{LSS2014,PSB2014,GL2008,MU2014,BCNO2015}
are designed for unconstrained problems. Unlike the methods in
\cite{ZST2010,CLST2016} applying the semi-Newton method to
 a sequence of subproblems, our target is a single system
of nonlinear equations. Although solving
the Newton system is a major challenge, the computational cost usually can be
controlled reasonably well when certain structures can be utilized.
Our preliminary numerical results show that
our proposed methods are able to reach  superlinear or quadratic convergence
rates on typical $\ell_1$-minimization problems. 


The  rest of this paper is organized as follows.  In section \ref{sec:split}, we
review a few popular operator splitting methods, derive their equivalent
fixed-point iterations and state their convergence properties.   We propose a semi-smooth
Newton method and establish its  convergence results in section \ref{sec:newton}.
Numerical results on a number of  applications are presented in section \ref{sec:apps}. Finally, we conclude this paper in section \ref{sec:conclusion}.

\subsection{Notations}
 Let $I$ be the identity operator or identity matrix of suitable size. Given a
 convex function $f:\R^n\rightarrow(-\infty,+\infty]$ and a scalar $t>0$, the \textit{proximal mapping} of $f$ is defined by
\be\label{eq:prox}
\prox_{tf}(x):=\argmin\limits_{u\in\R^n} f(u)+\frac{1}{2t}\|u-x\|_2^2.
\ee
If $f(x)=\ind_\Omega(x)$ is the indicator function of a nonempty closed convex set $\Omega\subset\R^n$, then the proximal mapping $\prox_{tf}$ reduces to the \textit{metric projection} defined by
\be\label{eq:proj}
\cP_\Omega(x):=\argmin\limits_{u\in \Omega} \frac{1}{2}\|u-x\|_2^2.
\ee
The Fenchel conjugate function $f^*$ of $f$ is 
\be\label{eq:conjugate}
f^*(y):=\sup\limits_{x\in\R^n}\{x^Ty-f(x)\}.
\ee
A function $f$ is said to be \textit{closed} if its epigraph is closed, or
equivalently $f$ is lower semicontinuous. 
A mapping $F:\R^n\rightarrow\R^n$ is said to be \textit{monotone}, if 
\[\iprod{x-y}{F(x)-F(y)}\geq 0,\quad \mbox{for all}\ x,y\in\R^n.
\]
\section{Operator splitting and fixed-point algorithms}\label{sec:split}
This section reviews some  operator splitting algorithms for problem
(\ref{eq:P}), including FBS, DRS, and ADMM. These algorithms are well studied in the literature, see \cite{FP2003II, BC2011, CP2011,DY2014I} for example. Most of the operator splitting algorithms can also be interpreted as fixed-point algorithms derived from certain optimality conditions.
\subsection{FBS}
In problem (\ref{eq:P}), let $h$ be a continuously differentiable function. The FBS algorithm is the iteration
\be\label{eq:FBS}
x^{k+1}=\prox_{tf}(x^k-t\nabla h(x^k)),\quad k=0,1,\ldots,
\ee
where $t>0$ is the step size. 
When $f(x)=\ind_C(x)$ is the indicator function of a closed convex set $C$, FBS reduces to the \textit{projected gradient method} for solving the constrained program
\[
\minimize\limits_{x\in \R^n}\quad h(x)\quad
\st\quad  x\in C.
\]
Define the following operator
\be\label{eq:f-FBS}
T_{\textrm{FBS}}:=\prox_{tf}\circ(I-t\nabla h).
\ee
Then FBS can be viewed as a fixed-point iteration 
\be\label{eq:fix-FBS}
x^{k+1}=T_{\textrm{FBS}}(x^k).
\ee

\subsection{DRS}
The DRS algorithm solves (\ref{eq:P}) by the following update:
\begin{align}
&x^{k+1} = \prox_{th}(z^k)\label{eq:DR1},\\
&y^{k+1} = \prox_{tf}(2x^{k+1}-z^k)\label{eq:DR2},\\
&z^{k+1} = z^k+y^{k+1}-x^{k+1}\label{eq:DR3}.
\end{align}
The algorithm is traced back to  \cite{DR1956,LM1979,EB1992} to solve partial differential equations (PDEs).
 The fixed-point iteration characterization of DRS is in the form of
\be\label{eq:fix-DRS}
z^{k+1}=T_{\textrm{DRS}}(z^k),
\ee
where
\be\label{eq:f-DRS}
T_{\textrm{DRS}}:=I+\prox_{tf}\circ(2\prox_{th}-I)-\prox_{th}.
\ee
  

\subsection{Dual operator splitting and ADMM}
Consider a linear constrained program:
\be\label{eq:LCP}
\begin{array}{ll}
\minimize
\limits_{x_1\in \R^{n_1},x_2\in\R^{n_2}}\quad &f_1(x_1)+f_2(x_2)\\
\st\quad & A_1x_1+A_2x_2=b,
\end{array}
\ee
where $A_1\in\R^{m\times n_1}$ and $A_2\in\R^{m\times n_2}$. The dual problem of (\ref{eq:LCP}) is given by
\be\label{eq:DLCP}
\minimize\limits_{w\in\R^m}\quad d_1(w)+d_2(w),
\ee
where 
\[
d_1(w):=f_1^*(A_1^Tw),\quad d_2(w):=f_2^*(A_2^Tw)-b^Tw.
\]

Assume that $f_1$ is closed and  strongly convex (which implies that $\nabla d_1$ is Lipschitz \cite[Proposition 12.60]{RW1998}) and $f_2$ is convex.
The FBS iteration for the dual problem (\ref{eq:DLCP})  can be expressed in
terms of the variables in the original problem under the name  alternating minimization algorithm, which is also equivalent to the fixed-point iteration
\[
w^{k+1}=T_{\textrm{FBS}}(w^k).
\]

Assume that $f_1$ and $f_2$ are convex. It is widely known that the DRS iteration for dual problem (\ref{eq:DLCP}) is
the ADMM \cite{GM1975,GM1976}. It is regarded as a variant of augmented
Lagrangian method and has attracted much attention in numerous fields. A recent
survey paper \cite{BPCPE2011} describes the applications of the ADMM to
statistics and machine learning. The ADMM is equivalent to the following fixed-point iteration
\[
z^{k+1}=T_{\textrm{DRS}}(z^k),
\]
where $T_{\textrm{DRS}}$ is the DRS fixed-point mapping for problem (\ref{eq:DLCP}).
\subsection{Convergence of the fixed-point algorithms}
We summarize the relationship between the aforementioned fixed-points and the optimal solution of problem (\ref{eq:P}), and review the existing convergence results on the fixed-point algorithms.

The following lemma is straightforward, and its proof is omitted.
\begin{lemma}\label{lem:relation}
Let the fixed-point mappings $T_{\textrm{FBS}}$ and $T_{\textrm{DRS}}$  be defined in (\ref{eq:f-FBS}) and (\ref{eq:f-DRS}), respectively.
\begin{itemize}
\item[(i)] Suppose that $f$ is closed, proper and convex, and $h$ is convex and continuously differentiable. A fixed-point of $T_{\textrm{FBS}}$ is equivalent to an optimal solution to problem (\ref{eq:P}). 
\item[(ii)] Suppose that $f$ and $h$ are both closed, proper and convex. Let $z^*$ be a fixed-point of $T_{\textrm{DRS}}$, then $\prox_{th}(z^*)$ is an optimal solution to problem (\ref{eq:P}).
\end{itemize}
\end{lemma}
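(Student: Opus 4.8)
The plan is to reduce both statements to the first-order optimality condition of problem~(\ref{eq:P}) through the subgradient characterization of the proximal mapping. The single fact I would rely on is that, for the proximal mapping defined in~(\ref{eq:prox}), $u=\prox_{tf}(y)$ holds if and only if $\frac{1}{t}(y-u)\in\partial f(u)$; this is just the optimality condition for the strongly convex minimization that defines $\prox_{tf}$. Since problem~(\ref{eq:P}) is convex, a point $x^*$ is a global minimizer precisely when $0\in\partial f(x^*)+\partial h(x^*)$, where in part~(i) the second term is simply $\{\nabla h(x^*)\}$ by differentiability.

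For part~(i), I would write the fixed-point equation $x^*=\prox_{tf}(x^*-t\nabla h(x^*))$ and apply the characterization above with $y=x^*-t\nabla h(x^*)$ and $u=x^*$. This yields $\frac{1}{t}\big((x^*-t\nabla h(x^*))-x^*\big)=-\nabla h(x^*)\in\partial f(x^*)$, that is, $0\in\partial f(x^*)+\nabla h(x^*)$, which is exactly the optimality condition for~(\ref{eq:P}). Because each step is an equivalence, the fixed-point property and global optimality are equivalent, delivering the claimed ``if and only if''.

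For part~(ii), I would set $x^*:=\prox_{th}(z^*)$ and unfold the definition~(\ref{eq:f-DRS}): the fixed-point equation $z^*=T_{\textrm{DRS}}(z^*)$ collapses to $\prox_{tf}(2x^*-z^*)=\prox_{th}(z^*)=x^*$. Applying the proximal characterization to $x^*=\prox_{th}(z^*)$ gives $\frac{1}{t}(z^*-x^*)\in\partial h(x^*)$, and applying it to $x^*=\prox_{tf}(2x^*-z^*)$ gives $\frac{1}{t}\big((2x^*-z^*)-x^*\big)=\frac{1}{t}(x^*-z^*)\in\partial f(x^*)$. Adding the two inclusions cancels the $z^*$ terms and produces $0\in\partial f(x^*)+\partial h(x^*)$, so $x^*$ is optimal.

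Since the result is acknowledged to be straightforward, I do not anticipate a genuine obstacle. The only point requiring a little care is that in part~(ii) I obtain $0\in\partial f(x^*)+\partial h(x^*)$ rather than invoking the sum rule $\partial(f+h)=\partial f+\partial h$, which need not hold without a constraint qualification. To sidestep this, I would conclude optimality directly from the subgradient inequalities: if $g_f\in\partial f(x^*)$ and $g_h\in\partial h(x^*)$ with $g_f+g_h=0$, then adding $f(x)\geq f(x^*)+\iprod{g_f}{x-x^*}$ and $h(x)\geq h(x^*)+\iprod{g_h}{x-x^*}$ gives $f(x)+h(x)\geq f(x^*)+h(x^*)$ for all $x$, establishing global optimality without appeal to any sum rule.
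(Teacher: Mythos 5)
Your proof is correct. The paper explicitly omits the proof of this lemma as ``straightforward,'' and your argument---characterizing $u=\prox_{tf}(y)$ by $\frac{1}{t}(y-u)\in\partial f(u)$, unfolding the fixed-point equations, and concluding optimality directly from the subgradient inequalities (thereby avoiding any sum-rule/constraint-qualification issue in part (ii))---is precisely the standard derivation the authors had in mind.
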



Error bound condition is a useful property for establishing the linear
convergence of a class of first-order  methods including the FBS method and
ADMM, see \cite{FP2003I, LT1993, Tseng2010, HSZ2015} and the references therein.
Let $X^*$ be the optimal solution set of problem (\ref{eq:P}) and $F(x)\in \R^n$ be a  
residual function satisfying $F(x) = 0$ if and only if $x \in X^*$. The
definition of error bound condition is given as follows.
\begin{definition}\label{def:EBC}
The error bound condition  holds for some test set $T$ and some residual function $F(x)$ if there exists a constant $\kappa > 0$ such that
\be\label{eq:EBC}
\dist(x, X^*) \leq \kappa \|F(x)\|_2 \quad  \mbox{ for all } x \in T.
\ee
In particular,
it is said that error bound condition with residual-based test set ($\emph{EBR}$) holds if the test set in (\ref{eq:EBC}) is selected by $T:=\{x\in\R^n|f(x)+h(x)\leq v, \|F(x)\|_2\leq\varepsilon\}$ for some constant $\varepsilon\geq 0$ and any $v \geq v^*:=\min_x f(x)+h(x)$.
\end{definition}

Under the error bound condition, the fixed-point iteration of FBS is proved to converge linearly, see  \cite[Theorem 3.2]{DL2016} for example.
\begin{proposition}[Linear convergence of FBS]\label{prop:LR-FBS}
Suppose that error bound condition (EBR) holds with parameter $\kappa$ for residual function $F_{\textrm{FBS}}$. Let $x^*$ be the limit point of the sequence $\{x^k\}$ generated by the fixed-point iteration $x^{k+1}=T_{\textrm{FBS}}(x^k)$ with $t\leq\beta^{-1}$ for some constant $\beta>0$. Then there exists an index $r$ such that for all $k\geq 1$,
\[
\|x^{r+k}-x^*\|_2^2\leq \left(1-\frac{1}{2\kappa\beta}\right)^kC\cdot(f(x^r)+h(x^r)-f(x^*)-h(x^*)),
\]
where $C:=\frac{2}{\beta(1-\sqrt{1-(2\beta\gamma)^{-1}})^2}$.
\end{proposition}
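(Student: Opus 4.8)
The plan is to track the objective gap $\Delta_k:=f(x^k)+h(x^k)-f(x^*)-h(x^*)$, show it contracts geometrically, and then transfer that decay to the iterates. Write $\phi:=f+h$, let $G_t(x):=\tfrac1t\left(x-T_{\textrm{FBS}}(x)\right)$ be the associated gradient mapping (so $G_t(x^k)=\tfrac1t(x^k-x^{k+1})$ and $\|F_{\textrm{FBS}}(x^k)\|_2$ is a fixed multiple of $\|x^k-x^{k+1}\|_2$), and let $\bar x^k$ denote the projection of $x^k$ onto $X^*$. The argument rests on three pieces: a sufficient-decrease estimate, a cost-to-go estimate supplied by the error bound, and a summation step converting gap decay into distance decay. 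Throughout I take the largest admissible step $t=\beta^{-1}$ so the constants come out as stated.

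First I would establish sufficient decrease. Since $\nabla h$ is $\beta$-Lipschitz, the descent lemma gives $h(x^{k+1})\le h(x^k)+\iprod{\nabla h(x^k)}{x^{k+1}-x^k}+\tfrac{\beta}{2}\|x^{k+1}-x^k\|_2^2$, while the minimizing property of the proximal step \eqref{eq:prox} evaluated at $u=x^k$ yields $f(x^{k+1})+\tfrac1{2t}\|x^{k+1}-x^k\|_2^2+\iprod{\nabla h(x^k)}{x^{k+1}-x^k}\le f(x^k)$. Adding these and using $t\le\beta^{-1}$ gives the prox-gradient inequality $\phi(x^{k+1})\le\phi(x^k)-\tfrac1{2t}\|x^{k+1}-x^k\|_2^2$. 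Hence $\{\phi(x^k)\}$ is non-increasing and bounded below, so $\|x^k-x^{k+1}\|_2\to0$ and $\|F_{\textrm{FBS}}(x^k)\|_2\to0$. Consequently both $\phi(x^k)\le v$ and $\|F_{\textrm{FBS}}(x^k)\|_2\le\varepsilon$ hold from some index $r$ onward, so the iterates lie in the test set $T$ and the error bound \eqref{eq:EBC} applies for all $k\ge r$; this is exactly what the index $r$ in the statement encodes.

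Next comes the cost-to-go estimate, which I expect to be the crux. Evaluating the general prox-gradient inequality $\phi(x^{k+1})\le\phi(y)+\iprod{G_t(x^k)}{x^k-y}-\tfrac{t}{2}\|G_t(x^k)\|_2^2$ at the comparison point $y=\bar x^k\in X^*$ gives $\Delta_{k+1}\le\iprod{G_t(x^k)}{x^k-\bar x^k}-\tfrac{t}{2}\|G_t(x^k)\|_2^2$. Applying Cauchy--Schwarz together with $\|x^k-\bar x^k\|_2=\dist(x^k,X^*)$, and then the error bound $\dist(x^k,X^*)\le\kappa\|F_{\textrm{FBS}}(x^k)\|_2$, bounds $\Delta_{k+1}$ by a constant multiple of $\|G_t(x^k)\|_2^2$. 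Combining with the sufficient-decrease bound $\tfrac{t}{2}\|G_t(x^k)\|_2^2\le\Delta_k-\Delta_{k+1}$ and solving the resulting inequality $\Delta_{k+1}\le a\,(\Delta_k-\Delta_{k+1})$ with $a=\tfrac{2\kappa}{t}-1$ for $\Delta_{k+1}$ yields the contraction $\Delta_{k+1}\le\bigl(1-\tfrac{t}{2\kappa}\bigr)\Delta_k$, which at $t=\beta^{-1}$ is precisely $\Delta_{k+1}\le\rho\,\Delta_k$ with $\rho:=1-\tfrac{1}{2\kappa\beta}$; iterating gives $\Delta_{r+k}\le\rho^k\Delta_r$. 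The delicate point is keeping the residual scaling consistent with the step $t$ so the rate lands exactly on $1-\tfrac{1}{2\kappa\beta}$ (and identifying the constant $\gamma$ in $C$ with the error-bound constant $\kappa$).

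Finally I would transfer the geometric gap decay to the iterates. Sufficient decrease gives $\|x^{j+1}-x^j\|_2\le\sqrt{2t\,(\Delta_j-\Delta_{j+1})}\le\sqrt{2t\,\Delta_r}\,\rho^{(j-r)/2}$ for $j\ge r$, so the steps are geometrically summable and $\{x^k\}$ is Cauchy with limit $x^*$. Summing the tail telescopically, $\|x^{r+k}-x^*\|_2\le\sum_{j\ge r+k}\|x^{j+1}-x^j\|_2\le\tfrac{\sqrt{2t\,\Delta_r}}{1-\sqrt\rho}\,\rho^{k/2}$; squaring and using $t\le\beta^{-1}$ produces $\|x^{r+k}-x^*\|_2^2\le\rho^k C\,\Delta_r$ with $C=\tfrac{2}{\beta(1-\sqrt\rho)^2}$, matching the claimed form. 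The remaining effort is purely bookkeeping of constants, so the genuine obstacle is the cost-to-go step of the third paragraph, where convexity, the error bound, and sufficient decrease must be combined in just the right way.
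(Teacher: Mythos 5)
The paper does not actually prove this proposition; it is quoted from the literature with a pointer to \cite{DL2016} (Theorem 3.2 there), so there is no in-paper proof to compare against. Your argument is, in substance, exactly the proof of that cited result: sufficient decrease from the descent lemma plus the prox step, the cost-to-go bound at the projection $\bar x^k$ combined with the error bound to get a geometric contraction of the objective gap, and geometric summability of the steps to transfer the decay to $\|x^{r+k}-x^*\|_2$. The steps are all sound and the constants come out as stated. The one genuine subtlety is the one you yourself flag as ``the delicate point'': with the paper's literal definition $F_{\textrm{FBS}}=I-T_{\textrm{FBS}}$, the bound $\dist(x^k,X^*)\le\kappa\|F_{\textrm{FBS}}(x^k)\|_2=\kappa t\|G_t(x^k)\|_2$ feeds into your cost-to-go estimate to give $\Delta_{k+1}\le(2\kappa-1)(\Delta_k-\Delta_{k+1})$ and hence the rate $1-\tfrac{1}{2\kappa}$, not $1-\tfrac{1}{2\kappa\beta}$. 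To land on $a=\tfrac{2\kappa}{t}-1$ and the advertised rate, the error bound must be read against the gradient mapping $G_t=t^{-1}(I-T_{\textrm{FBS}})$ as the residual function, which is how the cited reference normalizes it; the two versions differ only by absorbing a factor of $t=\beta^{-1}$ into $\kappa$. Your identification of the paper's undefined $\gamma$ with $\kappa$ is also the right reading. So: correct proof, matching the source the paper relies on, with the residual normalization being a discrepancy in the paper's statement rather than a gap in your argument.
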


%
Finally, we mention that the sublinear convergence rate of some general
fixed-point iterations has been well studied, see \cite[Theorem 1]{DY2014I}. 

\section{Semi-smooth Newton method for nonlinear monotone equations}\label{sec:newton}

The purpose of this section is to design a Newton-type method for solving the system of nonlinear equations
\be\label{eq:monoNE}
F(z)=0,
\ee
where $F:\R^n\rightarrow\R^n$ is strongly semi-smooth and monotone. In
particular, we are interested in $F(z)=z-T(z)$, where $T(z)$ is a fixed-point
mapping corresponding to certain first-order type algorithms.

\subsection{Semi-smoothness of proximal mapping}\label{sec:prox}
We now discuss the semi-smoothness of proximal mappings. This property often
implies that the fixed-point mappings corresponding to operator splitting algorithms are semi-smooth or strongly semi-smooth.

Let $\cO\subseteq\R^n$ be an open set and $F:\cO\rightarrow\R^m$
be a locally Lipschitz continuous function.  Rademacher's theorem says that $F$
is almost everywhere differentiable. Let $D_F$ be the set of differentiable
points of $F$ in $\cO$. We next introduce the concepts of generalized differential.
\begin{definition}\label{def:GD}
Let $F:\cO\rightarrow\R^m$ be locally Lipschitz continuous at $x\in\cO$. The \emph{B-subdifferential} of $F$ at $x$ is defined by
\[
\partial_BF(x):=\left\{\lim\limits_{k\rightarrow\infty}F'(x^k)| x^k\in D_F, x^k\rightarrow x\right\}.
\]
The set 
$\partial F(x)=\emph{co}(\partial_BF(x))$
is called Clarke's \emph{generalized Jacobian}, where $\emph{co}$ denotes the convex hull.
\end{definition}

The notion of semi-smoothness plays a key role on establishing locally superlinear convergence of the nonsmooth Newton-type method. Semi-smoothness was originally introduced by Mifflin \cite{Mifflin1977} for real-valued functions and extended to vector-valued mappings by Qi and Sun \cite{QS1993}.
\begin{definition}\label{def:semi}
Let $F:\cO\rightarrow\R^m$ be a locally Lipschitz continuous function. We say that $F$ is  semi-smooth at $x\in\cO$ if
\begin{itemize}
\item[(a)] $F$ is directionally differentiable at $x$; and
\item[(b)]  for any $d\in\cO$ and $J\in\partial F(x+d)$,
\[
\|F(x+d)-F(x)-Jd\|_2=o(\|d\|_2)\quad\mbox{as}\ d\rightarrow 0.
\]
\end{itemize}
Furthermore, $F$ is said to be strongly  semi-smooth at $x\in\cO$ if $F$ is semi-smooth and for any $d\in\cO$ and $J\in\partial F(x+d)$,
\[
\|F(x+d)-F(x)-Jd\|_2=O(\|d\|_2^2)\quad\mbox{as}\ d\rightarrow 0.
\]
\end{definition}

(Strongly) semi-smoothness is closed under scalar multiplication, summation and composition. The examples of semi-smooth functions include the smooth functions, all convex functions (thus norm), and the piecewise differentiable functions. Differentiable functions with Lipschitz gradients are strongly semi-smooth. For every $p\in[1,\infty]$, the norm $\|\cdot\|_{p}$ is strongly semi-smooth. Piecewise affine functions are strongly semi-smooth, such as $[x]_+=\max\{0,x\}$. A vector-valued function is (strongly) semi-smooth if and only if each of its component functions is (strongly) semi-smooth.  Examples of semi-smooth functions are thoroughly studied in \cite{FP2003II, Ulbrich2011}.

The basic properties of proximal mapping is well documented in textbooks such as
\cite{RW1998, BC2011}. The proximal mapping $\prox_{f}$, corresponding to a
proper, closed and convex function $f:\R^n\rightarrow\R$, is single-valued,
maximal monotone and nonexpansive. Moreover, the proximal mappings of many interesting functions
are (strongly) semi-smooth. It is worth mentioning that the semi-smoothness of
proximal mapping does not hold in general \cite{Shapiro1994}. The following lemma is useful when the proximal mapping of a function is complicate but the proximal mapping of its conjugate is easy.
\begin{lemma}[Moreau's decomposition]\label{lem:deco}
Let $f:\R^n\rightarrow\R$ be  a proper, closed and convex function. Then, for any $t>0$ and $x\in\R^n$,
\[
x=\prox_{tf}(x)+t\prox_{f^*/t}(x/t).
\]
\end{lemma}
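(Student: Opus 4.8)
The plan is to verify the identity directly from the first-order optimality conditions of the two proximal subproblems, with the conjugate subgradient inversion rule as the linchpin. First I would set $u:=\prox_{tf}(x)$. Since $u$ minimizes $f(\cdot)+\frac{1}{2t}\|\cdot-x\|_2^2$ and the quadratic regularizer is everywhere differentiable, the sum rule for subdifferentials gives the optimality condition $0\in\partial f(u)+\frac{1}{t}(u-x)$, that is, $v:=\frac{x-u}{t}\in\partial f(u)$.

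The key algebraic step is the conjugate subgradient relation: because $f$ is proper, closed and convex, one has $f^{**}=f$, and hence $v\in\partial f(u)$ if and only if $u\in\partial f^*(v)$. This is precisely where the closedness hypothesis is used, and I expect this inversion to be the main conceptual ingredient of the argument, everything else being routine. Applying it to the relation just obtained yields $u\in\partial f^*(v)$.

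Next I would examine $w:=\prox_{f^*/t}(x/t)$, which minimizes $\frac{1}{t}f^*(\cdot)+\frac{1}{2}\|\cdot-x/t\|_2^2$. Its first-order condition, after multiplying through by $t>0$, reads $x-tw\in\partial f^*(w)$. I would then simply check that the candidate $w=v$ satisfies it: since $tv=x-u$, we have $x-tv=u$, and $u\in\partial f^*(v)$ holds by the previous step. Because the objective defining $\prox_{f^*/t}$ is strongly convex, its minimizer is unique, so $\prox_{f^*/t}(x/t)=v=\frac{x-u}{t}$.

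Finally, multiplying by $t$ and substituting $u=\prox_{tf}(x)$ gives $t\,\prox_{f^*/t}(x/t)=x-\prox_{tf}(x)$, which rearranges to the claimed decomposition. The only points requiring care are the applicability of the subdifferential sum rule and of the conjugate inversion; both are guaranteed by the properness, closedness, and convexity of $f$, while the uniqueness of the proximal minimizers follows from the strong convexity of the quadratic-regularized objectives.
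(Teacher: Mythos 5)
Your argument is correct and complete: the optimality condition for $\prox_{tf}$, the conjugate subgradient inversion $v\in\partial f(u)\Leftrightarrow u\in\partial f^*(v)$ (valid because $f^{**}=f$ for proper closed convex $f$), and the uniqueness of the strongly convex proximal minimizer together give exactly the stated identity. The paper itself states this lemma as a known result without proof (deferring to standard references such as Rockafellar--Wets and Bauschke--Combettes), and your derivation is precisely the standard textbook argument, so there is nothing to reconcile.
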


We next review some existing results on the semi-smoothness of proximal mappings of various interesting functions.
The proximal mapping of $\ell_1$-norm $\|x\|_1$, which is the well-known soft-thresholding operator, is component-wise separable and piecewise affine. Hence, the operator $\prox_{\|\cdot\|_1}$ is strongly semi-smooth. According to the Moreau's decomposition,
the proximal mapping of $\ell_{\infty}$ norm (the conjugate of $\ell_1$ norm) is also strongly semi-smooth.
For $k\in\mathbb{N}$,  a function with $k$ continuous derivatives is called a $\cC^k$ function.
A function $f:\cO\rightarrow\R^m$ defined on the open set $\cO\subseteq\R^n$ is
called piecewise $\cC^k$  function, $k\in[1,\infty]$, if $f$ is continuous and
if at every point $\bar{x}\in\cO$ there exists a neighborhood $V\subset\cO$ and a finite collection of $\cC^k$ functions $f_i:V\rightarrow\R^m, i=1,\ldots,N$, such that
\[
f(x)\in\{f_1(x),\ldots,f_N(x)\}\quad \textrm{for all}\ x\in V.
\]
For a comprehensive study on piecewise $\cC^k$ functions, the readers are
referred to \cite{Scholtes2012}. From \cite[Proposition 2.26]{Ulbrich2011}, if $f$ is a piecewise $\cC^1$ (piecewise smooth) function, then $f$ is semi-smooth; if $f$ is a piecewise $\cC^2$ function, then $f$ is strongly semi-smooth.  As described in \cite[Section 5]{PSB2014}, in many applications the proximal mappings are piecewise $\cC^1$ and thus semi-smooth.
Metric projection, which is the proximal mapping of an indicator function, plays
an important role in the analysis of constrained programs. The projection over a
polyhedral set is piecewise linear \cite[Example 12.31]{RW1998} and hence
strongly semi-smooth. The projections over symmetric cones are proved to be
strongly semi-smooth in \cite{SS2002}. 

\subsection{Monotonicity of fixed-point mappings}\label{subsec:monotone}
This subsection focuses on the discussion of the monotonicity of the fixed-point mapping $F:=I-T$, where $T:\R^n\rightarrow\R^n$ is a fixed-point operator. Later, we will show that the monotone property of $F$ plays a critical role in our proposed method.

For the sake of readability, let us first recall some related concepts.  A mapping $F:\R^n\rightarrow\R^n$ is called \textit{strongly monotone} with modulus $c>0$ if  
 \[\iprod{x-y}{F(x)-F(y)}\geq c\|x-y\|_2^2,\quad\mbox{for all}\ x,y\in\R^n.
 \] 
It is said that $F$ is \textit{cocoercive} with modulus $\beta>0$ if 
\[\iprod{x-y}{F(x)-F(y)}\geq \beta\|F(x)-F(y)\|_2^2,\quad \mbox{for all}\ x,y\in\R^n.
\]

We now present the monotone properties of the fixed-point mappings $F_{\textrm{FBS}}=I-T_{\textrm{FBS}}$ and $F_{\textrm{DRS}}=I-T_{\textrm{DRS}}$.

\begin{proposition}\label{prop:mono-FBS}
	\begin{itemize}
		\item[(i)] Suppose that $\nabla h$ is cocoercive with $\beta>0$, then $F_{\textrm{FBS}}$ is monotone if  $0<t\leq 2\beta$.
		\item[(ii)] Suppose that $\nabla h$ is strongly monotone with $c>0$ and Lipschitz with $L>0$, then $F_{\textrm{FBS}}$ is strongly monotone if  $0<t<2c/L^2$.
		\item[(iii)]	 Suppose that $h\in C^2$, $H(x):=\nabla^2h(x)$ is positive semidefinite for any $x\in\R^n$ and $\bar{\lambda}=\max_x\lambda_{\textrm{max}}(H(x))<\infty$. Then, $F_{\textrm{FBS}}$ is monotone  if $0<t\leq 2/\bar{\lambda}$.
	    \item[(iv)] 	The fixed-point mapping $F_{\textrm{DRS}}:=I-T_{\textrm{DRS}}$  is monotone.
	\end{itemize}
\end{proposition}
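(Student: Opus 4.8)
The common thread in (i), (iii) and (iv) is the elementary observation that \emph{nonexpansiveness of $T$ implies monotonicity of $F=I-T$}: for any $x,y\in\R^n$,
\[
\iprod{x-y}{F(x)-F(y)}=\|x-y\|_2^2-\iprod{x-y}{T(x)-T(y)}\geq\|x-y\|_2^2-\|T(x)-T(y)\|_2\|x-y\|_2\geq0,
\]
where the last step combines Cauchy--Schwarz with $\|T(x)-T(y)\|_2\leq\|x-y\|_2$. I would therefore reduce (i), (iii) and (iv) to showing that the relevant operator is nonexpansive, and handle (ii) separately with a sharper contraction estimate. Throughout I use that $\prox_{tf}$ is nonexpansive — indeed firmly nonexpansive, equivalently $2\prox_{tf}-I$ is nonexpansive — and that a composition of nonexpansive maps is nonexpansive.

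For (i), write $T_{\textrm{FBS}}=\prox_{tf}\circ G$ with $G:=I-t\nabla h$. Expanding and invoking the $\beta$-cocoercivity of $\nabla h$ gives
\[
\|G(x)-G(y)\|_2^2=\|x-y\|_2^2-t(2\beta-t)\|\nabla h(x)-\nabla h(y)\|_2^2\leq\|x-y\|_2^2,
\]
precisely when $0<t\leq2\beta$, so $G$ and hence $T_{\textrm{FBS}}$ is nonexpansive and the lemma applies. For (iii), the assumptions $H(x)\succeq0$ and $\bar\lambda=\max_x\lambda_{\max}(H(x))<\infty$ say that $h$ is convex with $\bar\lambda$-Lipschitz gradient; by the Baillon--Haddad theorem $\nabla h$ is then $(1/\bar\lambda)$-cocoercive, and (iii) follows from (i) with $\beta=1/\bar\lambda$. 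A self-contained alternative writes $\nabla h(x)-\nabla h(y)=M(x-y)$ with $M:=\int_0^1 H(y+s(x-y))\,ds$ satisfying $0\preceq M\preceq\bar\lambda I$, so $G(x)-G(y)=(I-tM)(x-y)$ and $\|I-tM\|_2\leq1$ exactly when $t\leq2/\bar\lambda$.

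For (ii), nonexpansiveness alone cannot produce a strictly positive modulus, so instead I would show that $G=I-t\nabla h$ is a strict contraction. Using the strong monotonicity modulus $c$ and the Lipschitz constant $L$ of $\nabla h$,
\[
\|G(x)-G(y)\|_2^2\leq(1-2tc+t^2L^2)\|x-y\|_2^2=:\rho^2\|x-y\|_2^2,
\]
with $\rho<1$ whenever $0<t<2c/L^2$ (note $c\leq L$ forces $1-2tc+t^2L^2\geq0$). Then nonexpansiveness of $\prox_{tf}$ gives $\|T_{\textrm{FBS}}(x)-T_{\textrm{FBS}}(y)\|_2\leq\rho\|x-y\|_2$, so by Cauchy--Schwarz $\iprod{x-y}{T_{\textrm{FBS}}(x)-T_{\textrm{FBS}}(y)}\leq\rho\|x-y\|_2^2$, whence $\iprod{x-y}{F_{\textrm{FBS}}(x)-F_{\textrm{FBS}}(y)}\geq(1-\rho)\|x-y\|_2^2$, i.e.\ strong monotonicity with modulus $1-\rho>0$.

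For (iv), the one genuinely non-obvious step is to rewrite $T_{\textrm{DRS}}$ in reflection form. Setting $R_{tf}:=2\prox_{tf}-I$ and $R_{th}:=2\prox_{th}-I$, a direct expansion shows that the operator defined in (\ref{eq:f-DRS}) satisfies $T_{\textrm{DRS}}=\tfrac12\bigl(I+R_{tf}R_{th}\bigr)$. Since $\prox_{tf}$ and $\prox_{th}$ are firmly nonexpansive, the reflections $R_{tf},R_{th}$ are nonexpansive, hence so is the composition $R_{tf}R_{th}$; consequently $T_{\textrm{DRS}}$ is the average of $I$ with a nonexpansive operator, which is (firmly) nonexpansive, and the lemma once more yields monotonicity of $F_{\textrm{DRS}}=I-T_{\textrm{DRS}}$. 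I expect the main obstacle to lie less in any single estimate than in correctly identifying the right structural property for each case — nonexpansiveness for (i), (iii), (iv) versus strict contraction for (ii) — and, for (iv), in verifying the reflection identity; once those are in place the individual bounds are short Cauchy--Schwarz or Hessian-averaging computations.
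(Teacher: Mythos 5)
Your proof is correct, and it is more self-contained than the paper's. The paper dispatches items (i) and (ii) by citation to the literature and item (iv) by quoting the firm nonexpansiveness of $T_{\textrm{DRS}}$ from Lions--Mercier; the only item it actually argues is (iii), and there it takes exactly your route: show that $\nabla h$ is $(1/\bar{\lambda})$-cocoercive (via the mean value theorem rather than your Hessian integral, the latter being the cleaner form of the same estimate for vector fields) and then reduce to (i). Your preliminary observation that nonexpansiveness of $T$ gives monotonicity of $I-T$ by Cauchy--Schwarz, your verification that $I-t\nabla h$ is nonexpansive (resp.\ a $\rho$-contraction) under the stated step-size bounds, and your reflection identity $T_{\textrm{DRS}}=\tfrac12(I+R_{tf}R_{th})$ are precisely the arguments hiding behind the paper's citations, so the two proofs agree in substance; what yours buys is independence from external references, plus an explicit strong-monotonicity modulus $1-\sqrt{1-2tc+t^2L^2}$ in (ii), where your remark that $c\leq L$ guarantees nonnegativity under the square root is the one detail that is easy to overlook. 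One cosmetic slip: in (i) the displayed relation $\|G(x)-G(y)\|_2^2=\|x-y\|_2^2-t(2\beta-t)\|\nabla h(x)-\nabla h(y)\|_2^2$ should be an inequality $\leq$, since cocoercivity has already been invoked to absorb the cross term; the conclusion is unaffected.
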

\begin{proof}
Items (i) and (ii) are well known in the literature, see \cite{ZL2001} for example.
%
		
(iii) From the mean value theorem, there exists some $x'$ such that
\[
\nabla h(x)-\nabla h(y)=H(x')(x-y).
\]
Hence,
$ \|\nabla h(x)-\nabla h(y)\|^2
\leq\bar{\lambda}\iprod{x-y}{\nabla h(x)-\nabla h(y)}$, 
which implies that $\nabla h$ is cocoercive with $1/\bar{\lambda}$. Hence, the monotonicity is obtained from item (i) .

	(iv) It has been shown that the operator $T_{\textrm{DRS}}$ is firmly nonexpansive, see \cite{LM1979}. Therefore, $F_{\textrm{DRS}}$ is firmly nonexpansive  and hence monotone \cite[Proposition 4.2]{BC2011}.
\end{proof}

Items (i) and (ii) demonstrate that $F_{\textrm{FBS}}$ is monotone as long as the step size $t$ is properly selected.
It is also shown in \cite{ZL2001} that, when $f$ is an indicator function of a
convex closed set, the step size interval in items (i) and (ii)  can be enlarged to $(0,4\beta]$ and $(0,4c/L^2)$, respectively. Item (iii)  can also be found in \cite[Lemma 4.1]{HYZ2008}.
Finally, we introduce an useful lemma on the positive semidefinite property of the subdifferential of the monotone mapping.
\begin{lemma}\label{lem:sdp-mono}
	For a monotone and Lipschitz continuous mapping $F:\R^n\rightarrow\R^n$ and any $x\in\R^n$, each element of $\partial_BF(x)$ is positive semidefinite.
\end{lemma}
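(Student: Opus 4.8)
The plan is to first show that the Jacobian of $F$ at any point of differentiability is positive semidefinite, and then pass to the limit using the definition of the B-subdifferential. The monotonicity of $F$ does all the real work; the Lipschitz hypothesis serves only to guarantee, via Rademacher's theorem, that the set $D_F$ of differentiable points is dense, so that $\partial_B F(x)$ is well defined and nonempty.

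First I would fix a point $y\in D_F$ at which $F$ is differentiable and an arbitrary direction $d\in\R^n$. Applying monotonicity to the pair $y+td$ and $y$ for $t>0$ gives
\[
\iprod{(y+td)-y}{F(y+td)-F(y)}=t\,\iprod{d}{F(y+td)-F(y)}\geq 0,
\]
so that $\iprod{d}{\tfrac{1}{t}\left(F(y+td)-F(y)\right)}\geq 0$. Letting $t\downarrow 0$ and using differentiability of $F$ at $y$, the difference quotient converges to $F'(y)d$, whence $\iprod{d}{F'(y)d}\geq 0$. Since $d$ is arbitrary, $F'(y)$ is positive semidefinite.

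Next I would take any $V\in\partial_B F(x)$. By Definition \ref{def:GD} there is a sequence $x^k\in D_F$ with $x^k\rightarrow x$ and $F'(x^k)\rightarrow V$. For each fixed $d$ the previous step yields $\iprod{d}{F'(x^k)d}\geq 0$, and since the quadratic form is continuous in the entries of the matrix, passing to the limit gives $\iprod{d}{Vd}\geq 0$. As $d$ was arbitrary, $V$ is positive semidefinite.

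I do not expect any genuine obstacle here. The one point that warrants a word of care is that positive semidefiniteness of a possibly nonsymmetric matrix is understood as $\iprod{d}{Vd}\geq 0$ for all $d$ (equivalently, the symmetric part $\tfrac{1}{2}(V+V^T)$ is positive semidefinite in the usual sense), and it is precisely this inequality that is preserved both under the one-sided limit $t\downarrow 0$ and under the matrix limit $x^k\rightarrow x$.
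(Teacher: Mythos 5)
Your proof is correct and follows essentially the same route as the paper: establish $\iprod{d}{F'(y)d}\geq 0$ at differentiable points using monotonicity along the ray $y+td$, then pass to the limit over sequences in $D_F$ defining $\partial_BF(x)$. The only cosmetic difference is that you take the limit of the difference quotient directly, whereas the paper argues by contradiction with an $o(t^2)$ estimate; both hinge on exactly the same inequality.
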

\begin{proof} We first show that $F'(\bar{x})$ is positive semidefinite at a differentiable point $\bar{x}$. Suppose that there exist constant $a>0$ and $d\in\R^n$ with $\|d\|_2=1$ such that $\iprod{d}{F'(\bar{x})d}=-a$. For any $t>0$, let
$	\Phi(t):=F(\bar{x}+td)-F(\bar{x})-tF'(\bar{x})d$.  
	Since $F$ is differentiable at $\bar{x}$, we have $\|\Phi(t)\|_2=o(t)$ as $t\rightarrow 0$. The monotonicity of $F$ indicates that
	\[
	\begin{array}{ll}
			0&\leq\iprod{td}{F(\bar{x}+td)-F(\bar{x})}=\iprod{td}{tF'(\bar{x})d+\Phi(t)}\\[5pt]
			&\leq -at^2+t\|d\|_2\|\Phi(t)\|_2= -at^2+o(t^2),
	\end{array}
	\]
	which leads to a contradictory.
	
	For any $x\in\R^n$ and each $J\in\partial_BF(x)$, there exists a sequence of
    differentiable points $x^k\rightarrow x$ such that $F'(x^k)\rightarrow J$. Since every $F'(x^k)$ is positive semidefinite, we have that $J$ is also positive semidefinite.
\end{proof}

\subsection{A Regularized semi-smooth Newton method with Projection steps}
The system of monotone equations has various applications
\cite{OR1970,SS1999,ZL2008,LL2011,AAB2013}.  Inspired by a pioneer work \cite{SS1999},  a class of iterative
methods for solving nonlinear (smooth) monotone equations were proposed in
recent years \cite{ZL2008,LL2011,AAB2013}. In \cite{SS1999}, the authors
proposed a globally convergent Newton method by exploiting the structure of
monotonicity, whose primary advantage is that the whole sequence of the
distances from the iterates to the solution set  is decreasing. 
The method is extended in \cite{ZT2005} to solve monotone equations without nonsingularity assumption. 

The main concept in \cite{SS1999} is introduced as follows. For an iterate $z^k$, let $d^k$ be a descent direction such that
\[
\iprod{F(u^k)}{-d^k}>0,
\]
where $u^k=z^k+d^k$ is an intermediate iterate.
By monotonicity of $F$, for any $z^*\in Z^*$ one has
\[
\iprod{F(u^k)}{z^*-u^k}\leq 0.
\]
Therefore, the hyperplane
\[
H_k:=\{z\in\R^n|\iprod{F(u^k)}{z-u^k}=0\}
\]
strictly separates $z^k$ from the solution set $Z^*$. 
 Based on this fact, it was developed in \cite{SS1999} that the next iterate is set by
\[
z^{k+1}=z^k-\frac{\iprod{F(u^k)}{z^k-u^k}}{\|F(u^k)\|_2^2}F(u^k).
\]
It is easy to show that the point $z^{k+1}$ is the projection of $z^k$ onto the
hyperplane $H_k$.  The hyperplane projection step is  critical  to construct a
globally convergent method for solving the system of nonlinear monotone equations. 
By applying the same technique, we develop a globally convergent
method for solving semi-smooth monotone equations (\ref{eq:monoNE}).

It has been demonstrated in Lemma \ref{lem:sdp-mono} that each element of the B-subdifferential of a monotone and
semi-smooth mapping is positive semidefinite. Hence, for an iterate $z^k$, by
choosing an element $J_k\in\partial_BF(z^k)$, it is natural to apply a
regularized Newton method. It computes
\be\label{eq:r-newton}
(J_k+\mu_kI)d=-F^k,
\ee
where $F^k=F(z^k)$, $\mu_k=\lambda_k\|F^k\|_2$ and $\lambda_k>0$ is a
regularization parameter. The regularization term $\mu_k I$ is chosen such that
$J_k+\mu_k I$ is invertible. From a computational view, it is practical to
solve the linear system (\ref{eq:r-newton}) inexactly.
Define
\be\label{eq:rk}
r^k:=(J_k+\mu_kI)d^k+F^k.
\ee
At each iteration, we seek a step $d^k$ by solving (\ref{eq:r-newton}) approximately such that
\be\label{eq:residual}
\|r^k\|_2\leq\tau\min\{1,\lambda_k\|F^k\|_2 \|d^k\|_2\},
\ee
where $0<\tau<1$  is some positive constant. Then a trial point is obtained as
\[
u^k=z^k+d^k.
\]
Define a ratio
\be\label{eq:rho}
\rho_k=\frac{-\iprod{F(u^k)}{d^k}}{\|d^k\|_2^2}.
\ee
Select some parameters $0<\eta_1\le\eta_2<1$ and $1<\gamma_1 <\gamma_2$. If
$\rho_k\geq\eta_1$, the iteration is said to be \textit{successful}.
Otherwise, the iteration is \textit{unsuccessful}. Moreover, for a successful
iteration, if $\|F(u^k)\|_2$ is sufficiently decreased, we take a Newton step,
otherwise we take a hyperplane projection step. In summary, we set
\be\label{eq:zk}
z^{k+1}=\left\{\begin{array}{lll}
u^k,\ &\textrm{if}\ \rho_k\geq\eta_1 \mbox{ and } \|F(u^k)\|_2 \le \nu \|F(\bar{u})\|_2, &\mbox{ [Newton step]}\\
v^k,\ &\textrm{if}\
\rho_k\geq\eta_1 \mbox{ and } \|F(u^k)\|_2 > \nu \|F(\bar{u})\|_2, &\mbox{ [projection step]}\\
z^k, &\textrm{otherwise}. &\mbox{ [unsuccessful iteration]}
\end{array}\right.
\ee
where $0<\nu<1$, 
\be\label{eq:HP-step}
v^k=z^k-\frac{\iprod{F(u^k)}{z^k-u^k}}{\|F(u^k)\|_2^2}F(u^k),
\ee
 and the reference point $\bar{u}$ is the iteration from the last Newton step. More specifically, when $\rho_k\geq\eta_1$  and  $\|F(u^k)\|_2 \le \nu \|F(\bar{u})\|_2$, we take $z^{k+1}=u^k$ and update $\bar{u}=u^k$.

Finally, the regularization parameter $\lambda_k$ is updated as
\begin{equation}\label{eq:lambda}
\lambda_{k+1}\in\left\{
\begin{array}{lll}
(\uline{\lambda},\lambda_k),\quad &\textrm{if}\ \rho_k\geq\eta_2,\\[8pt]
[\lambda_k,\gamma_1\lambda_k],\quad &\textrm{if}\ \eta_1\leq\rho_k<\eta_2,\\[8pt]
(\gamma_1\lambda_k,\gamma_2\lambda_k],\quad &\textrm{otherwise,}
\end{array}
\right.
\end{equation}
where $\uline{\lambda}>0$ is a small positive constant.
These parameters determine how aggressively the regularization parameter is
decreased when an iteration is successful or it is increased when an iteration
is unsuccessful. The complete approach to solve (\ref{eq:monoNE}) is summarized
in Algorithm \ref{algo:newton}.

\begin{algorithm2e}[htb!]\label{algo:newton}\caption{\emph{An Adaptive Semi-smooth Newton
  (ASSN) method}}
 Give  $0<\tau,\nu<1$,  $0<\eta_1\leq\eta_2<1$ and
  $1<\gamma_1\leq\gamma_2$ \; Choose $z^0$ and $\varepsilon>0$. Set $k=0$ and
  $\bar{u}=z^0$ \;
  \While{not ``converged''}{
Select $J_k\in\partial_B F(x^k)$\;
Solve the linear system (\ref{eq:r-newton})
approximately such that $d^k$ satisfies (\ref{eq:residual}) \;
 Compute $u^k=z^k+d^k$ and calculate the ratio $\rho_k$ as in (\ref{eq:rho}) \;
Update $z^{k+1}$ and $\lambda_{k+1}$ according to (\ref{eq:zk}) and
(\ref{eq:lambda}), respectively \; 
  Set $k=k+1$\;
  }
\end{algorithm2e}

\subsection{Global convergence}
It is clear that  a solution is obtained if Algorithm \ref{algo:newton} terminates in finitely many iterations. Therefore, we assume that Algorithm \ref{algo:newton} always generates an infinite sequence $\{z^k\}$ and $d^k\neq 0$ for any $k\geq 0$.  Let $Z^*$ be
the solution set of system (\ref{eq:monoNE}). Throughout this section, we assume that $Z^*$ is nonempty.
The following assumption is used in the sequel.
\begin{assumption}\label{assu:jacobian}
  Assume that $F:\R^n\rightarrow\R^n$ is strongly semi-smooth and monotone.
Suppose that there exists a constant $c_1>0$ such that 
$\|J_k\|\leq c_1$ for any $k\geq 0$ and any $J_k\in\partial_{B}F(z^k)$.
\end{assumption}

The following lemma demonstrates that the distance from  $z^k$ to $Z^*$
decreases in a projection step. The proof follows directly from \cite[Lemma
2.1]{SS1999}, and it is omitted.
\begin{lemma}\label{lem:hyper}
For any $z^*\in Z^*$ and any projection step, indexed by say $k$, we have that
\be\label{eq:distance}
\|z^{k+1}-z^*\|_2^2\leq\|z^k-z^*\|_2^2-\|z^{k+1}-z^k\|_2^2.
\ee
\end{lemma}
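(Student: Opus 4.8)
The plan is to exploit the geometric picture already laid out in the text: in a projection step the point $v^k$ (which then equals $z^{k+1}$) is the orthogonal projection of $z^k$ onto the separating hyperplane $H_k=\{z\mid\iprod{F(u^k)}{z-u^k}=0\}$, and the Pythagorean inequality for such a projection, against any point lying on the far side of $H_k$, is exactly the asserted decrease. So the whole argument reduces to (a) confirming that $z^k$ and $z^*$ sit on opposite sides of $H_k$, and (b) a short expansion of squared norms.

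First I would record two inner-product facts. Writing $w:=F(u^k)$ and recalling $u^k-z^k=d^k$, the successful-iteration condition $\rho_k\geq\eta_1>0$ together with the definition \eqref{eq:rho} gives
\[
\iprod{w}{z^k-u^k}=-\iprod{w}{d^k}=\rho_k\|d^k\|_2^2>0,
\]
so $z^k$ lies strictly on the positive side of $H_k$. On the other hand, monotonicity of $F$ and $F(z^*)=0$ yield $\iprod{w}{u^k-z^*}=\iprod{F(u^k)-F(z^*)}{u^k-z^*}\geq 0$, i.e.\ $\iprod{w}{z^*-u^k}\leq 0$, placing $z^*$ weakly on the opposite side. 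These are precisely the separation relations motivating the hyperplane step.

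Next I would write $z^{k+1}=z^k-\alpha w$ with $\alpha:=\iprod{w}{z^k-u^k}/\|w\|_2^2>0$, as follows from \eqref{eq:HP-step}. Expanding,
\[
\|z^{k+1}-z^*\|_2^2=\|z^k-z^*\|_2^2-2\alpha\iprod{w}{z^k-z^*}+\alpha^2\|w\|_2^2,
\qquad
\|z^{k+1}-z^k\|_2^2=\alpha^2\|w\|_2^2,
\]
so the claimed bound \eqref{eq:distance} is equivalent to $2\alpha\bigl(\alpha\|w\|_2^2-\iprod{w}{z^k-z^*}\bigr)\leq 0$. Since $\alpha\|w\|_2^2=\iprod{w}{z^k-u^k}$ by the definition of $\alpha$, the bracketed quantity equals $\iprod{w}{z^*-u^k}\leq 0$, and $\alpha>0$; hence the product is nonpositive and the inequality follows.

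I do not expect a genuine obstacle here, which is consistent with the authors citing \cite[Lemma 2.1]{SS1999} and omitting the proof: once the setup is in place the result is pure algebra. The only point needing care is the sign bookkeeping that puts $z^k$ and $z^*$ on opposite sides of $H_k$, and this rests entirely on two inputs guaranteed by the algorithm's design, namely the successful-iteration test $\rho_k\geq\eta_1$ (which secures $\alpha>0$) and the monotonicity of $F$ assumed in Assumption \ref{assu:jacobian} (which secures the separation). Everything else is the elementary expansion above.
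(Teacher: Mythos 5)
Your proof is correct and is exactly the standard hyperplane-projection argument that the paper defers to by citing \cite[Lemma 2.1]{SS1999}: the separation of $z^k$ and $z^*$ by $H_k$ (from $\rho_k\geq\eta_1$ and monotonicity with $F(z^*)=0$) plus the expansion of squared norms. The sign bookkeeping is right ($\alpha>0$ uses the paper's standing assumption $d^k\neq 0$, and even $\alpha\geq 0$ would suffice), so there is nothing to add.
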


Recall that $F$ is strongly semi-smooth. Then for a point $z\in\R^n$ there exists $c_2>0$ (dependent on $z$) such that for any $d\in\R^n$ and any $J\in\partial_BF(z+d)$  ,
\be\label{eq:semi}
\|F(z+d)-F(z)-Jd\|_2\leq c_2\|d\|_2^2,\quad\textrm{as}\ \|d\|_2\rightarrow 0.
\ee
Denote the index sets of  Newton steps, projection steps and successful iterations, respectively,   by
\[
\begin{array}{ll}
\cK_{N}:=\{k\geq 0: \rho_k\geq\eta_1,\  \|F(u^k)\| \le \nu \|F(\bar{u})\|\}, \\[5pt]
\cK_{P}:=\{k\geq 0: \rho_k\geq\eta_1,\  \|F(u^k)\| > \nu \|F(\bar{u})\|\}
\end{array}
\]
and
\[
\cK_{S}:=\{k\geq 0: \rho_k\geq\eta_1\}.
\]
We next show that if there are only finitely many successful iterations, the later iterates are optimal solutions.
\begin{lemma}\label{lem:finite}
Suppose that Assumption \ref{assu:jacobian} holds and the  index set $\cK_{S}$ is finite. Then $z^k=z^*$ for all sufficiently large $k$ and $F(z^*)=0$.
\end{lemma}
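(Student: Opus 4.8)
The plan is to exploit the update rules of Algorithm \ref{algo:newton} in the regime where every iteration past some index is unsuccessful, and then derive a contradiction from the behaviour of the regularized Newton step as the regularization parameter blows up. First I would locate the tail of the sequence. Since $\cK_{S}$ is finite, there is an index $k_0$ with $\rho_k<\eta_1$ for all $k\ge k_0$; by the third (unsuccessful) branch of (\ref{eq:zk}) this forces $z^{k+1}=z^k$ for every such $k$, so the iterates are eventually constant and I set $z^*:=z^{k_0}=z^k$ for all $k\ge k_0$, which already gives the first assertion. Moreover, each iteration $k\ge k_0$ falls into the last case of (\ref{eq:lambda}), so $\lambda_{k+1}>\gamma_1\lambda_k$ with $\gamma_1>1$; hence $\lambda_k\ge\gamma_1^{\,k-k_0}\lambda_{k_0}\to\infty$.

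Next I argue by contradiction, assuming $\delta:=\|F(z^*)\|_2>0$, so that $\mu_k=\lambda_k\|F^k\|_2=\lambda_k\delta\to\infty$ for $k\ge k_0$. Taking the inner product of the defining relation $(J_k+\mu_k I)d^k=r^k-F^k$ (from (\ref{eq:r-newton}) and (\ref{eq:rk})) with $d^k$, and using $\iprod{d^k}{J_k d^k}\ge0$ (Lemma \ref{lem:sdp-mono}), I get $\mu_k\|d^k\|_2^2\le\iprod{d^k}{r^k-F^k}\le\|d^k\|_2(\|r^k\|_2+\delta)$. Since $\lambda_k\|F^k\|_2=\mu_k$, the residual condition (\ref{eq:residual}) gives $\|r^k\|_2\le\tau\mu_k\|d^k\|_2$, and combining the two yields $(1-\tau)\mu_k\|d^k\|_2\le\delta$, i.e. $\|d^k\|_2\le 1/((1-\tau)\lambda_k)\to 0$. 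In particular $u^k=z^k+d^k\to z^*$.

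The core step is then to lower-bound the ratio $\rho_k$ in (\ref{eq:rho}). I split $-\iprod{F(u^k)}{d^k}=-\iprod{F^k}{d^k}+\iprod{F^k-F(u^k)}{d^k}$. For the first term, writing $F^k=r^k-(J_k+\mu_k I)d^k$ and again using $\iprod{d^k}{J_k d^k}\ge0$ together with $\|r^k\|_2\le\tau\mu_k\|d^k\|_2$ gives $-\iprod{F^k}{d^k}\ge(1-\tau)\mu_k\|d^k\|_2^2$. For the second term I cannot use monotonicity, since it has the \emph{unfavourable} sign; instead I invoke local Lipschitz continuity of the (strongly semi-smooth, hence locally Lipschitz) mapping $F$ on a fixed neighbourhood of $z^*$ that contains all $u^k$ for large $k$ (this is exactly where $\|d^k\|_2\to0$ is used), obtaining $\iprod{F^k-F(u^k)}{d^k}\ge -L\|d^k\|_2^2$ for some constant $L$. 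Dividing by $\|d^k\|_2^2$ then gives $\rho_k\ge(1-\tau)\lambda_k\delta-L\to+\infty$, so $\rho_k\ge\eta_1$ for $k$ large, contradicting $\rho_k<\eta_1$ for all $k\ge k_0$. Hence $\delta=0$, i.e. $F(z^*)=0$.

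I expect the main obstacle to be the second term in the splitting of $\rho_k$: monotonicity of $F$ yields $\iprod{F(u^k)-F^k}{d^k}\ge0$, which is precisely the wrong sign for a descent estimate, so the argument must instead control $F(u^k)-F^k$ through local Lipschitz continuity and the fact that $\|d^k\|_2\to0$ is driven by $\lambda_k\to\infty$. The delicate balance to verify is that the favourable term $(1-\tau)\mu_k=(1-\tau)\lambda_k\delta$ grows without bound and hence dominates the bounded Lipschitz perturbation $L$, which is exactly what forces an eventually successful iteration unless $\delta=0$.
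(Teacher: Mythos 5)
Your proof is correct and follows essentially the same route as the paper's: the tail of iterates is constant, $\lambda_k\to\infty$ forces $d^k\to 0$, and a contradiction is obtained by showing $\rho_k$ eventually exceeds the success threshold unless $F(z^*)=0$. The only difference is in the error term: you bound $\iprod{F^k-F(u^k)}{d^k}\ge -L\|d^k\|_2^2$ via local Lipschitz continuity of $F$, whereas the paper inserts a generalized Jacobian $J_{u^k}\in\partial_BF(u^k)$ and uses strong semi-smoothness together with $\|J_{u^k}\|\le c_1$ to get $-(c_1+c_2)\|d^k\|_2^2$ -- your variant is marginally simpler and equally sufficient.
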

\begin{proof}
Denote the index of the last successful iteration by $k_0$. The construction of
the algorithm implies that $z^{k_0+i}=z^{k_0+1}:=z^*$, for all $i\geq 1$ and
additionally $\lambda_k\rightarrow\infty$. Suppose that $a:=\|F(z^*)\|_2>0$. For
all $k> k_0$, it follows from (\ref{eq:rk}) that  
\[
d^k=(J_k+\lambda_k\|F^k\|_2I)^{-1}(r^k-F^k),
\]
which, together with $\lambda_k\rightarrow\infty$, $\|r^k\|_2\leq\tau$ and the fact that $J_k$ is positive semidefinite, imply that $d^k\rightarrow 0$, and hence $u^k\rightarrow z^*$.

We now show that when  $\lambda_k$ is large enough, the ratio $\rho_k$ is not
smaller than $\eta_2$. For this purpose, we consider an iteration with index  $k> k_0$ sufficiently large such that $\|d^k\|_2\leq 1$ and
\[
\lambda_k\geq\frac{\eta_2+c_1+c_2}{a-\tau a}.
\]
Then, it yields that
\be\label{eq:ineq-a1}
\begin{array}{ll}
-\iprod{F(z^k)}{d^k}&=\iprod{(J_k+\lambda_k\|F^k\|_2I)d^k)}{d^k}-\iprod{r^k}{d^k}\\[5pt]
&\geq\lambda_k\|F^k\|_2\|d^k\|_2^2-\tau\lambda_k\|F^k\|_2\|d^k\|_2^2\\[5pt]
&\geq(\eta_2+c_1+c_2)\|d^k\|_2^2.
\end{array}
\ee
Further, for any $J_{u^k}\in\partial_BF(u^k)$ we obtain
\[
\begin{array}{ll}
-\iprod{F(u^k)}{d^k}\\
=-\iprod{F(z^k)}{d^k}-\iprod{J_{u^k}d^k}{d^k}+\iprod{-F(u^k)+F(z^k)+J_{u^k}d^k}{d^k}\\[5pt]
\geq-\iprod{F(z^k)}{d^k}-c_1\|d^k\|_2^2-\|F(z^*+d^k)-F(z^*)-J_{u^k}d^k\|_2\\[5pt]
\geq(\eta_2+c_1+c_2)\|d^k\|_2^2-c_1\|d^k\|_2^2-c_2\|d^k\|_2^2\\[5pt]
=\eta_2\|d^k\|_2^2,
\end{array}
\]
 where the first inequality is from Assumption \ref{assu:jacobian} and the facts that
 $\|d^k\|_2\leq 1$ and $z^k=z^*$, and the second inequality comes from (\ref{eq:ineq-a1}) and
 (\ref{eq:semi}). Hence, we have $\rho_k\geq\eta_2$, which generates a successful iteration and yields a contradiction. This completes the proof.
\end{proof}

The following result shows that a solution is derived if the set $\cK_{N}$ is infinite.
\begin{lemma}\label{lem:inf-KN}
Let Assumption \ref{assu:jacobian} hold. If the sequence $\{z^k\}$ contains infinitely many iterates resulting from Newton steps, i.e., $|\cK_{N}|=\infty$, then $\{z^k\}$ converges to some point $\bar{z}$ such that $F(\bar{z})=0$.	
\end{lemma}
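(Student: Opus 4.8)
The plan is to reduce the claim to a quasi-Fej\'er monotonicity argument with respect to the solution set $Z^*$. First I would enumerate the Newton steps as $k_1<k_2<\cdots$ (infinitely many, by hypothesis). Because $z^{k_j+1}=u^{k_j}$ and $\bar u$ is reset to $u^{k_j}$ at each Newton step, the acceptance test $\|F(u^{k_{j+1}})\|_2\le\nu\|F(\bar u)\|_2$ reads $\|F(u^{k_{j+1}})\|_2\le\nu\|F(u^{k_j})\|_2$ for $j\ge 1$. Since $0<\nu<1$, the residuals along the Newton subsequence contract geometrically, so $\|F(u^{k_j})\|_2\to 0$ and moreover $\sum_j\|F(u^{k_j})\|_2<\infty$.

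The crucial step, and the one I expect to be the main obstacle, is to convert the decay of the residuals into summability of the Newton displacements $\|d^{k_j}\|_2$. Here the ratio test does the work: $\rho_{k_j}\ge\eta_1$ means $-\iprod{F(u^{k_j})}{d^{k_j}}\ge\eta_1\|d^{k_j}\|_2^2$, while Cauchy--Schwarz gives $-\iprod{F(u^{k_j})}{d^{k_j}}\le\|F(u^{k_j})\|_2\|d^{k_j}\|_2$; combining the two yields $\|d^{k_j}\|_2\le\eta_1^{-1}\|F(u^{k_j})\|_2$, so $\sum_{k\in\cK_N}\|d^k\|_2<\infty$. It is worth noting that the a priori bound $\|d^k\|_2\le 1/((1-\tau)\lambda_k)$ coming from the regularized system (\ref{eq:r-newton}) is only a boundedness estimate and does not force the displacements to vanish; it is the acceptance condition $\rho_{k_j}\ge\eta_1$, not the regularization, that supplies the summability.

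With the Newton displacements summable I would establish quasi-Fej\'er monotonicity. Fix any $z^*\in Z^*$. On unsuccessful iterations $z^{k+1}=z^k$, on projection iterations Lemma~\ref{lem:hyper} gives $\|z^{k+1}-z^*\|_2\le\|z^k-z^*\|_2$, and on a Newton iteration the triangle inequality gives $\|z^{k+1}-z^*\|_2\le\|z^k-z^*\|_2+\|d^k\|_2$. Hence $\|z^{k+1}-z^*\|_2\le\|z^k-z^*\|_2+\epsilon_k$ with $\epsilon_k:=\|d^k\|_2\,\ind_{\{k\in\cK_N\}}$ and $\sum_k\epsilon_k<\infty$. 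The standard quasi-Fej\'er lemma (the auxiliary sequence $\|z^k-z^*\|_2+\sum_{j\ge k}\epsilon_j$ is nonincreasing and bounded below) then shows that $\{z^k\}$ is bounded and that $\lim_{k\to\infty}\|z^k-z^*\|_2$ exists for every $z^*\in Z^*$.

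Finally I would identify the limit. Since $\{u^{k_j}\}$ lies in the bounded sequence $\{z^k\}$, pass to a subsequence $u^{k_{j_l}}\to\bar z$; continuity of $F$ (it is locally Lipschitz, being strongly semi-smooth) together with $\|F(u^{k_j})\|_2\to 0$ gives $F(\bar z)=0$, so $\bar z\in Z^*$. Applying the previous paragraph with $z^*=\bar z$ shows that $\lim_k\|z^k-\bar z\|_2$ exists, while the subsequence $z^{k_{j_l}+1}=u^{k_{j_l}}\to\bar z$ forces that limit to be $0$. Therefore the whole sequence $\{z^k\}$ converges to $\bar z$ with $F(\bar z)=0$, as claimed.
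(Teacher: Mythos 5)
Your proof is correct, and it reaches the conclusion by a genuinely different route than the paper. The paper's proof is much shorter: it enumerates the post-Newton iterates $z^{k_i}$, asserts that the geometric contraction $\|F(z^{k_i})\|_2\le\nu\|F(z^{k_{i-1}})\|_2$ already forces $\{z^{k_i}\}$ to converge to a solution $\bar z$, and then notes that for $k\notin\cK_N$ the distance to $\bar z$ cannot increase (projection steps by Lemma~\ref{lem:hyper}, unsuccessful steps trivially), so the whole sequence inherits the limit of the Newton subsequence. The step the paper leaves implicit --- why decay of the residuals alone should yield boundedness, let alone convergence, of the Newton iterates --- is precisely what you supply: the acceptance ratio $\rho_k\ge\eta_1$ combined with Cauchy--Schwarz gives $\|d^k\|_2\le\eta_1^{-1}\|F(u^k)\|_2$, which converts the geometric residual decay into summability of the Newton displacements; quasi-Fej\'er monotonicity of $\{z^k\}$ with respect to $Z^*$ then delivers boundedness and the existence of $\lim_k\|z^k-z^*\|_2$ for every $z^*\in Z^*$, after which a cluster point of the bounded Newton subsequence identifies $\bar z\in Z^*$ and forces the limit to be zero. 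Your remark that the a priori bound $\|d^k\|_2\le 1/((1-\tau)\underline{\lambda})$ from the regularized system is only a boundedness estimate, and that the ratio test is the true source of control, is exactly the right diagnosis. The cost of your route is length; what it buys is a complete justification of the first sentence of the paper's proof, which as written is a nontrivial leap.
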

\begin{proof}
We first show that a subsequence of $\{z^k\}$ converges to a solution of $F(z)=0$.
	Let $(k_i)_{i\geq 0}$ enumerate all elements of the set $\{k+1:k\in\cK_{N}\}$ in increasing order. Since $\|F(z^{k_i})\|_2\leq \nu\|F(z^{k_{i-1}})\|_2$ and $0<\nu<1$, we have that the subsequence $\{z^{k_i}\}$ converges to a solution $\bar{z}$ as $i\rightarrow\infty$. 
	
	For any $k\notin\cK_{N}$, we have $\|z^{k+1}-\bar{z}\|_2\leq\|z^{k}-\bar{z}\|_2$ from the updating rule (\ref{eq:zk}) and Lemma \ref{lem:hyper}. Moreover, for any $k\notin\cK_{N}$, there exists an index $i$ such that $k_i<k+1<k_{i+1}$, and hence $\|z^{k+1}-\bar{z}\|_2\leq\|z^{k_i}-\bar{z}\|_2$. Therefore, the whole sequence $\{z^k\}$ converges to $\bar{z}$.
\end{proof}

We are now ready to prove the main global convergence result. In specific, we show that the infinite sequence $\{z^k\}$ generated by Algorithm \ref{algo:newton} always converges to some solution.
\begin{theorem}\label{th:newton}
Let Assumption \ref{assu:jacobian} hold. Then $\{z^k\}$ converges to some point $\bar{z}$ such that $F(\bar{z})=0$.
\end{theorem}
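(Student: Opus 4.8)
The plan is to split the analysis according to the behavior of the index sets $\cK_S$ and $\cK_N$, since Lemmas \ref{lem:finite} and \ref{lem:inf-KN} already dispose of two of the three possibilities. If $\cK_S$ is finite, Lemma \ref{lem:finite} gives $z^k=\bar z$ for all large $k$ with $F(\bar z)=0$. If $|\cK_N|=\infty$, Lemma \ref{lem:inf-KN} yields convergence to a zero of $F$. Hence the only remaining case is $\cK_S$ infinite but $\cK_N$ finite, which forces $|\cK_P|=\infty$; this is where the real work lies, and I would devote the rest of the proof to it.

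For this case, let $k^*$ be the last Newton step, so that for $k>k^*$ every iteration is either a projection step or an unsuccessful one. In both situations $\|z^{k+1}-z^*\|_2\le\|z^k-z^*\|_2$ for every $z^*\in Z^*$: the unsuccessful case is trivial because $z^{k+1}=z^k$, and the projection case is exactly Lemma \ref{lem:hyper}. Thus $\{z^k\}$ is eventually Fej\'er monotone with respect to $Z^*$, so it is bounded and $\|z^k-z^*\|_2$ converges for each $z^*$; summing the inequality of Lemma \ref{lem:hyper} over the projection steps gives $\sum_k\|z^{k+1}-z^k\|_2^2<\infty$, hence $\|z^{k+1}-z^k\|_2\to 0$. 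Next I would bound $\{d^k\}$: taking the inner product of (\ref{eq:rk}) with $d^k$, using $J_k\succeq 0$ (Lemma \ref{lem:sdp-mono}) and the residual bound (\ref{eq:residual}), one gets $(1-\tau)\lambda_k\|d^k\|_2\le 1$, so $\|d^k\|_2\le 1/((1-\tau)\uline{\lambda})$. Consequently $\{u^k\}$ is bounded and $\|F(u^k)\|_2\le M$ for some $M$. Combining $\|z^{k+1}-z^k\|_2=-\iprod{F(u^k)}{d^k}/\|F(u^k)\|_2\ge\eta_1\|d^k\|_2^2/M$ for projection steps with $\|z^{k+1}-z^k\|_2\to 0$ then yields $d^k\to 0$ along $\cK_P$.

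The crux is to settle this case by a dichotomy on $\liminf_k\|F(z^k)\|_2$. If $\liminf_k\|F(z^k)\|_2=0$, then some subsequence of the bounded sequence $\{z^k\}$ converges to a point $\tilde z$ with $F(\tilde z)=0$ by continuity; since $\tilde z\in Z^*$ and $\|z^k-\tilde z\|_2$ converges, the whole sequence converges to $\tilde z$ and we are done. So I must derive a contradiction from $\liminf_k\|F(z^k)\|_2=\delta>0$. The key estimate is a lower bound on the ratio: expanding $F(u^k)=F(z^k)+J_{u^k}d^k+e_k$ with $\|e_k\|_2\le c_2\|d^k\|_2^2$ (strong semi-smoothness, (\ref{eq:semi})) and substituting $F(z^k)=r^k-(J_k+\mu_kI)d^k$ from (\ref{eq:rk}), together with $J_k,J_{u^k}\succeq 0$ and (\ref{eq:residual}), gives $\rho_k\ge(1-\tau)\mu_k-c_1-c_2\|d^k\|_2$. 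Hence whenever $\rho_k<\eta_2$ (the only steps at which $\lambda$ is increased in (\ref{eq:lambda})) the regularization is controlled, $\mu_k=\lambda_k\|F^k\|_2<\mu_{\max}$ for an explicit constant $\mu_{\max}$; combined with $\|F^k\|_2\ge\delta$ for large $k$ this bounds $\lambda_k$ at every increase, and since $\lambda$ only decreases otherwise, $\{\lambda_k\}$ is bounded. With $\{\mu_k\}$ bounded and $d^k\to 0$, the identity $F(u^k)=(J_{u^k}-J_k)d^k-\mu_kd^k+r^k+e_k$ forces $\|F(u^k)\|_2\to 0$ along $\cK_P$. This contradicts the defining inequality $\|F(u^k)\|_2>\nu\|F(\bar u)\|_2$ of a projection step, because $\bar u$ is frozen at the last Newton iterate and $\nu\|F(\bar u)\|_2>0$ (if $F(\bar u)=0$, then $\bar u\in Z^*$ and Fej\'er monotonicity would pin $z^k$ at $\bar u$, contradicting $d^k\neq 0$).

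The single hardest ingredient is this last contradiction: showing that the self-adaptive rule (\ref{eq:lambda}) keeps $\lambda_k$ bounded precisely when $\|F(z^k)\|_2$ stays away from zero, and that bounded regularization together with $d^k\to 0$ drives $\|F(u^k)\|_2$ to zero, undercutting the projection-acceptance threshold and thereby proving that infinitely many projection steps without Newton steps cannot persist. Everything else — the Fej\'er bookkeeping, the bound on $\|d^k\|_2$, and the $\liminf=0$ branch — is comparatively routine.
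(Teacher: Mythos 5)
Your proof is correct and follows essentially the same route as the paper's: the same three-way case split via Lemmas \ref{lem:finite} and \ref{lem:inf-KN}, the same Fej\'er-monotonicity and $\|d^k\|_2\le 1/((1-\tau)\uline{\lambda})$ bounds, the same dichotomy on $\liminf_k\|F^k\|_2$, and the same mechanism (large $\lambda_k$ forces $\rho_k\ge\eta_2$) for bounding $\{\lambda_k\}$. The only divergence is the terminal contradiction in the $\liminf_k\|F^k\|_2>0$ branch: the paper lower-bounds $\|d^k\|_2$ via $\|F^k\|_2\le(c_1+(1+\tau)c_5\bar{\lambda})\|d^k\|_2$ and contradicts $\rho_k\ge\eta_1$ through $\rho_k\|d^k\|_2^2\to 0$, whereas you drive $\|F(u^k)\|_2\to 0$ along $\cK_P$ and contradict the acceptance threshold $\|F(u^k)\|_2>\nu\|F(\bar u)\|_2$ --- both are valid, with your variant requiring the extra (correctly handled) observation that $F(\bar u)\neq 0$.
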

\begin{proof}
If  the index set $\cK_{S}$ is finite, the result is directly from Lemma \ref{lem:finite}. The case that $\cK_N$ is infinite has bee established in Lemma \ref{lem:inf-KN}. The remaining part of the proof is to deal with the occurrence of that $\cK_N$ is finite and $\cK_P$ is infinite. In this situation, without loss of generality, we can ignore $\cK_N$ and assume that $\cK_S=\cK_P$ in the sequel.

Let $z^*$ be any point in solution set $Z^*$.  By Lemma \ref{lem:hyper}, for any $k\in 
\cK_{S}$, it yields that 
\be\label{eq:dis1}
\|z^{k+1}-z^*\|_2^2\leq \|z^k-z^*\|_2^2-\|z^{k+1}-z^k\|_2^2.
\ee
Therefore, the sequence $\{\|z^k-z^*\|_2\}$ is non-increasing and convergent,  the sequence $\{z^k\}$ is bounded, and
\be\label{eq:dis2}
\lim\limits_{k\rightarrow\infty}\|z^{k+1}-z^k\|_2=0.
\ee
By (\ref{eq:rk}) and (\ref{eq:residual}), it follows that
\[
\|F^k\|_2\geq\|(J_k+\lambda_k\|F^k\|_2I)d^k\|_2-\|r^k\|_2\geq(1-\tau)\lambda_k\|F^k\|_2\|d^k\|_2,
\]
which implies that $\|d^k\|_2\leq 1/[(1-\tau)\uline{\lambda}]$.
This inequality shows that $\{d^k\}$ is bounded, and $\{u^k\}$ is also bounded. By using the continuity of $F$, there exists a constant $c_3>0$ such that
\[
\|F(u^k)\|_2^{-1}\geq c_3,\quad \textrm{for any }\ k\geq 0.
\]
Using (\ref{eq:zk}),  for any $k\in\cK_{S}$, we obtain that
\[
\|z^{k+1}-z^k\|_2=\frac{-\iprod{F(u^k)}{d^k}}{\|F(u^k)\|_2}\geq c_3\rho_k\|d^k\|_2^2,
\]
which, together with (\ref{eq:dis2}),  imply that
\be\label{eq:conv1}
\lim\limits_{k\rightarrow\infty, k\in\cK_{S}}\rho_k\|d^k\|_2^2=0.
\ee

We next consider two possible cases:
\[
\liminf\limits_{k\rightarrow\infty}\|F^k\|_2=0\quad\textrm{and}\quad\liminf\limits_{k\rightarrow\infty}\|F^k\|_2=c_4>0.
\]
In the first case, the continuity of $F$ and the boundedness of $\{z^k\}$ imply
that the sequence $\{z^k\}$ has some accumulation point $\hat{z}$ such that
$F(\hat{z})=0$.  Since $z^*$ is an arbitrary point in $Z^*$, we can choose
$z^*=\hat{z}$ in (\ref{eq:dis1}). Then $\{z^k\}$ converges to $\hat{z}$.

In the second case, by using the continuity of $F$ and the boundedness of $\{z^k\}$ again, there exist constants $c_5>c_6>0$ such that
\[
c_6\leq\|F^k\|_2\leq c_5,\quad \textrm{for all}\ k\geq 0. 
\]
If $\lambda_k$ is large enough such that $\|d^k\|_2\leq 1$ and 
$$\lambda_k\geq \frac{\eta_2+c_1+c_2}{(1-\tau) c_6},$$
then by a similar proof as in Lemma \ref{lem:finite} we have that
$\rho_k\geq\eta_2$ and consequently $\lambda_{k+1}<\lambda_k$. Hence, it turns
out that $\{\lambda_k\}$ is bounded from above, by say $\bar{\lambda}>0$.
Using (\ref{eq:rk}), (\ref{eq:residual}), Assumption \ref{assu:jacobian} and the upper bound of $\{\lambda_k\}$, we have
\[
\|F^k\|_2\leq\|(J_k+\lambda_k\|F^k\|_2I)d^k\|_2+\|r^k\|_2\leq (c_1+(1+\tau) c_5\bar{\lambda})\|d^k\|_2.
\]
Hence, it follows that
\[
\liminf\limits_{k\rightarrow\infty}\|d^k\|_2>0.
\]
Then, by (\ref{eq:conv1}), it must hold that
\[
\lim\limits_{k\rightarrow\infty, k\in\cK_{S}}\rho_k=0,
\]
which yields a contradiction to the definition of $\cK_{S}$. Hence the second case is not possible.  The proof is completed.
\end{proof}

As is already shown,  the global convergence of  our Algorithm is essentially
guaranteed by the projection step. However, by noticing that (\ref{eq:HP-step})
is in the form of $v^k=z^k-\alpha_kF(u^k)$ with
$\alpha_k=\iprod{F(u^k)}{z^k-u^k}/\|F(u^k)\|_2^2>0$, the projection step is
indeed an extragradient step \cite{FP2003II}. Since the  asymptotic convergence
rate of the extragradient step is often not faster than that of the Newton step, a
slow convergence may be observed if the projection step is always performed.
Hence, our modification \eqref{eq:zk} is practically meaningful. Moreover, we
will next prove that the projection step will never be performed when the iterate is close
enough to a solution under some generalized nonsingular conditions.

\subsection{Fast local convergence}
Since Algorithm \ref{algo:newton} has been shown to be globally convergent, we
now assume that the sequence $\{z^k\}$ generated by Algorithm \ref{algo:newton}
converges to a solution $z^*\in Z^*$. Under some reasonable conditions, we will
prove that the Newton steps achieve a locally quadratic convergence. Moreover, we
will show that when the iteration point $z^k$ is close enough to $z^*$, the
condition $\|F(u^k)\|_2\leq\nu\|F(z^k)\|_2$ is always satisfied. Consequently, Algorithm \ref{algo:newton}  turns into a second-order Newton method in a neighborhood of $z^*$.

We make the following assumption.
\begin{assumption}\label{assu:local}
The mapping $F$ is BD-regular at $z^*$, that is, all elements in $\partial_BF(z^*)$ are nonsingular.
\end{assumption}

The BD-regularity is a common assumption in the analysis of the local
convergence  of nonsmooth methods. The following properties of the BD-regularity are directly derived from \cite[Proposition 2.5]{Qi1993} and \cite[Proposition 3]{PQ1993}.

\begin{lemma}\label{lem:bd}
	Suppose that $F$ is BD-regular at $z^*$, then there exist  constants $c_0>0,\kappa>0$ and a neighborhood $N(z^*,\varepsilon_0)$  such that for any $y\in N(z^*,\varepsilon_0)$ and $J\in\partial_BF(y)$, 
	\begin{itemize}
		\item[(i)] $J$ is nonsingular and $\|J^{-1}\|\leq c_0$;
		\item[(ii)] $z^*$ is an isolated solution;
		\item[(iii)] the error bound condition holds for  $F(z)$ and $N(z^*,\varepsilon_0)$, that is $\|y-z^*\|_2\leq\kappa\|F(y)\|_2$.
	\end{itemize}
\end{lemma}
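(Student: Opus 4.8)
The plan is to establish the three items in the order (i), then (iii), then (ii), since both (iii) and (ii) follow from the nonsingularity in (i) combined with the strong semi-smoothness of $F$ and the fact that $F(z^*)=0$. The heart of the matter is item (i). The key tool is that, since $F$ is locally Lipschitz continuous, the set-valued map $y\mapsto\partial_BF(y)$ is locally bounded and upper semicontinuous (outer semicontinuous) with nonempty compact values. Because $F$ is BD-regular at $z^*$, every matrix in the compact set $\partial_BF(z^*)$ is nonsingular, so the smallest singular value attains a positive minimum $\delta:=\min_{J\in\partial_BF(z^*)}\sigma_{\min}(J)>0$. The set of matrices whose smallest singular value exceeds $\delta/2$ is open and contains $\partial_BF(z^*)$, so by upper semicontinuity there is a radius $\varepsilon_0>0$ such that $\partial_BF(y)$ is contained in this open set for every $y\in N(z^*,\varepsilon_0)$. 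Hence every $J\in\partial_BF(y)$ is nonsingular with $\|J^{-1}\|\leq 2/\delta=:c_0$; this is exactly the content of \cite[Proposition 2.5]{Qi1993}.

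Next I would derive the error bound (iii) from (i). Fix $y\in N(z^*,\varepsilon_0)$ and $J\in\partial_BF(y)$, and write $d=y-z^*$. The strong semi-smoothness inequality (\ref{eq:semi}), together with $F(z^*)=0$, gives $\|F(y)-Jd\|_2\leq c_2\|d\|_2^2$. Using the bound on $\|J^{-1}\|$ from (i), we have $\|d\|_2=\|J^{-1}Jd\|_2\leq c_0\|Jd\|_2\leq c_0\bigl(\|F(y)\|_2+\|F(y)-Jd\|_2\bigr)\leq c_0\|F(y)\|_2+c_0c_2\|d\|_2^2$. Shrinking $\varepsilon_0$ if necessary so that $c_0c_2\|d\|_2\leq\tfrac12$ on the neighborhood absorbs the quadratic term into $\tfrac12\|d\|_2$, yielding $\|y-z^*\|_2\leq 2c_0\|F(y)\|_2$, i.e. (iii) holds with $\kappa=2c_0$.

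Item (ii) is then immediate: if $y\in N(z^*,\varepsilon_0)$ satisfies $F(y)=0$, the error bound (iii) forces $\|y-z^*\|_2=0$, so $z^*$ is the only zero of $F$ in the neighborhood and is therefore isolated. The main obstacle is item (i), and the subtlety there is that one cannot argue pointwise with a single limiting Jacobian: one must control \emph{all} elements of $\partial_BF(y)$ uniformly over an entire neighborhood of $z^*$. This uniformity rests on the outer semicontinuity and local boundedness of the B-subdifferential (consequences of local Lipschitzness) together with the compactness of $\partial_BF(z^*)$, which are precisely the ingredients packaged in \cite[Proposition 2.5]{Qi1993} and \cite[Proposition 3]{PQ1993}. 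Once the uniform lower bound $\delta$ on the singular values is secured, steps (iii) and (ii) are routine.
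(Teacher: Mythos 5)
Your proposal is correct. The paper itself offers no proof of this lemma—it simply defers to \cite[Proposition 2.5]{Qi1993} for item (i) and \cite[Proposition 3]{PQ1993} for items (ii)--(iii)—and your argument (outer semicontinuity plus compactness of $\partial_BF(z^*)$ for the uniform inverse bound, then the strong semi-smoothness estimate to absorb the quadratic remainder and obtain the error bound with $\kappa=2c_0$, with isolatedness as an immediate corollary) is exactly the standard derivation packaged in those cited results, so it follows the same route the paper implicitly takes.
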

Since $z^*$ is isolated, the term $\dist(y,Z^*)$ in Definition \ref{def:EBC}
 is degenerated to $\|y-z^*\|_2$ and it becomes the error bound condition in item (iii).   The local convergence relies on some auxiliary results.
\begin{lemma}\label{lem:aux}
Suppose that Assumption \ref{assu:local} holds true, then
\begin{itemize}
	\item[(i)] the parameter $\lambda_k$ is bounded above by some constant $\bar{\lambda}>0$;
	\item[(ii)] there exists some $L> 0$ such that $\|F(z)\|_2\leq L\|z-z^*\|_2$ for any $z\in N(z^*,\varepsilon_0)$;
	\item[(iii)] for any $z^k\in N(z^*,\varepsilon_1)$ with $\varepsilon_1:=\min\{\varepsilon_0,1/(2Lc_0\tau\bar{\lambda})\}$, we have $$\|d^k\|_2\leq 2c_0L\|z^k-z^*\|_2.$$
\end{itemize}	
\end{lemma}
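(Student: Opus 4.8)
\textit{Order and item (ii).} I would prove the three items in the order (ii), (i), (iii), since the modulus $L$ from (ii) and the bound $\bar\lambda$ from (i) both enter $\varepsilon_1$ and the estimate in (iii). Item (ii) uses only local Lipschitz continuity: strong semi-smoothness presupposes that $F$ is locally Lipschitz, so $F$ is Lipschitz on $N(z^*,\varepsilon_0)$ with some modulus $L>0$, and since $z^*\in Z^*$ forces $F(z^*)=0$, every $z\in N(z^*,\varepsilon_0)$ satisfies $\|F(z)\|_2=\|F(z)-F(z^*)\|_2\le L\|z-z^*\|_2$. Neither monotonicity nor BD-regularity is needed.

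\textit{Item (i).} The plan is to localize the ``very successful iteration'' mechanism used for the second case of Theorem \ref{th:newton} and in Lemma \ref{lem:finite}. Since $z^k\to z^*$, eventually $z^k\in N(z^*,\varepsilon_0)$, where Lemma \ref{lem:bd} gives $\|J_k^{-1}\|\le c_0$ and the error bound $\|z^k-z^*\|_2\le\kappa\|F^k\|_2$. From \eqref{eq:rk}, monotonicity of $J_k$ (Lemma \ref{lem:sdp-mono}) and the residual condition \eqref{eq:residual} one obtains $\|d^k\|_2\le 1/[(1-\tau)\lambda_k]$, so the step is tiny when $\lambda_k$ is large; feeding this into the same chain as in Lemma \ref{lem:finite} yields
\[
-\iprod{F(u^k)}{d^k}\ge\bigl[(1-\tau)\lambda_k\|F^k\|_2-c_1-c_2\|d^k\|_2\bigr]\|d^k\|_2^2 ,
\]
so that a sufficiently large $\lambda_k$ forces $\rho_k\ge\eta_2$ and hence $\lambda_{k+1}<\lambda_k$ by \eqref{eq:lambda}; as $\lambda$ can grow by at most $\gamma_2$ per step, this caps $\lambda_k$ by some $\bar\lambda$. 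The hard part is making this threshold uniform in the local regime: in Theorem \ref{th:newton} the analogous case enjoyed a strict lower bound $\|F^k\|_2\ge c_6>0$, whereas here $\|F^k\|_2\to0$, so the driving quantity $\mu_k=\lambda_k\|F^k\|_2$ need not be large just because $\lambda_k$ is. I would control this interplay by combining the two-sided bound $\|z^k-z^*\|_2/\kappa\le\|F^k\|_2\le L\|z^k-z^*\|_2$ with $\|d^k\|_2\le 1/[(1-\tau)\lambda_k]$ and $\|J_k^{-1}\|\le c_0$ to argue that large $\lambda_k$ can coexist with convergence only in the over-regularized regime where $\mu_k$ is itself large, whence $\rho_k\ge\eta_2$. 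This is the step that must exploit BD-regularity beyond monotonicity, and is, I expect, the principal obstacle.

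\textit{Item (iii).} Given (i) and (ii), the key observation is that adding a nonnegative multiple $\mu_kI$ to a monotone $J_k$ cannot degrade its invertibility: using $\iprod{J_kx}{x}\ge0$ (Lemma \ref{lem:sdp-mono}) and $\mu_k\ge0$,
\[
\|(J_k+\mu_kI)x\|_2^2=\|J_kx\|_2^2+2\mu_k\iprod{J_kx}{x}+\mu_k^2\|x\|_2^2\ge\|J_kx\|_2^2\ge\|x\|_2^2/c_0^2 ,
\]
so $\|(J_k+\mu_kI)^{-1}\|\le c_0$ on $N(z^*,\varepsilon_0)$. Then $d^k=(J_k+\mu_kI)^{-1}(r^k-F^k)$ together with \eqref{eq:residual} and $\lambda_k\le\bar\lambda$ from (i) gives
\[
\|d^k\|_2\le c_0\bigl(\|r^k\|_2+\|F^k\|_2\bigr)\le c_0\tau\bar\lambda\|F^k\|_2\,\|d^k\|_2+c_0\|F^k\|_2 ,
\]
and since $\|F^k\|_2\le L\|z^k-z^*\|_2$ by (ii), for $z^k\in N(z^*,\varepsilon_1)$ with $\varepsilon_1=\min\{\varepsilon_0,1/(2Lc_0\tau\bar\lambda)\}$ the coefficient $c_0\tau\bar\lambda\|F^k\|_2\le\tfrac12$, so absorbing that term yields $\|d^k\|_2\le 2c_0\|F^k\|_2\le 2c_0L\|z^k-z^*\|_2$. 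The only subtlety is the inverse bound above, which holds because $J_k$ is monotone even though it need not be symmetric.
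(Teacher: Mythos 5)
Your items (ii) and (iii) are correct and follow essentially the same route as the paper: (ii) is just local Lipschitz continuity of the strongly semi-smooth $F$ together with $F(z^*)=0$, and (iii) is the paper's own computation $\|d^k\|_2\le\|(J_k+\mu_kI)^{-1}F^k\|_2+\|(J_k+\mu_kI)^{-1}r^k\|_2\le c_0L\|z^k-z^*\|_2+c_0\tau\lambda_k\|F^k\|_2\|d^k\|_2$, after which the second term is absorbed using $c_0\tau\bar{\lambda}\|F^k\|_2\le 1/2$ on $N(z^*,\varepsilon_1)$. Your explicit verification that $\|(J_k+\mu_kI)^{-1}\|\le c_0$, via $\|(J_k+\mu_kI)x\|_2^2\ge\|J_kx\|_2^2$ using the positive semidefiniteness of $J_k$ from Lemma \ref{lem:sdp-mono}, is a detail the paper uses silently, and it is worth recording since $J_k$ need not be symmetric.

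The gap is item (i), which you have diagnosed accurately but not closed. The paper disposes of (i) with the sentence that it ``has been shown in the proof of global convergence,'' but the argument there (the second case in the proof of Theorem \ref{th:newton}) establishes boundedness of $\lambda_k$ only under the hypothesis $\liminf_k\|F^k\|_2=c_4>0$, which is exactly the case being refuted there; in the regime relevant to Lemma \ref{lem:aux} one has $z^k\to z^*$ and $\|F^k\|_2\to 0$, so the threshold $\lambda_k\ge(\eta_2+c_1+c_2)/((1-\tau)c_6)$ has no analogue. The quantity that must be large to force $\rho_k\ge\eta_2$ in the chain of inequalities of Lemma \ref{lem:finite} is $\mu_k=\lambda_k\|F^k\|_2$, not $\lambda_k$, and the error bound of Lemma \ref{lem:bd} only gives $\|F^k\|_2\ge\|z^k-z^*\|_2/\kappa$, which also tends to zero. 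Your proposal honestly flags this as the principal obstacle but then offers only the intention to argue that large $\lambda_k$ forces $\mu_k$ large; no mechanism is supplied for this, and none of the estimates you list ($\|d^k\|_2\le 1/[(1-\tau)\lambda_k]$, $\|J_k^{-1}\|\le c_0$, the two-sided bound on $\|F^k\|_2$) delivers it, since a positive semidefinite nonsymmetric $J_k$ with bounded inverse need not satisfy $\iprod{J_kd}{d}\ge c\|d\|_2^2$. So as written, (i) remains unproved in your proposal; in fairness, the paper's own justification of (i) is equally incomplete, and your writeup at least makes the difficulty explicit rather than citing an argument that does not apply.
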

\begin{proof}
Item (i) has been shown in the proof of global convergence. The local Lipschitz continuity in item (ii) is obvious since $F$ is semi-smooth. For any $z^k\in N(z^*,\varepsilon_1)$, one has $\|F^k\|_2\leq L\|z^k-z^*\|_2\leq L\varepsilon_1$, hence 
\be\label{eq:aux1}
c_0\tau\lambda_k\|F^k\|_2\leq c_0\tau\bar{\lambda}\|F^k\|_2\leq 1/2.
\ee
 Note that
\[
\begin{array}{ll}
\|d^k\|_2&\leq \|(J_k+\mu_k I)^{-1}F^k\|_2+\|(J_k+\mu_k I)^{-1}r^k\|_2\\	
&\leq c_0L\|z^k-z^*\|_2+c_0\tau\lambda_k\|F^k\|_2\|d^k\|_2,
\end{array}
\]
we have $(1-c_0\tau\lambda_k\|F^k\|_2)\|d^k\|_2\leq c_0L\|z^k-z^*\|_2$, which, together with (\ref{eq:aux1}), yields $\|d^k\|_2\leq 2c_0L\|z^k-z^*\|_2$.
\end{proof}

We next show that the Newton steps are locally quadratically convergent.
\begin{theorem}\label{th:quad}
Suppose that Assumption \ref{assu:local} holds.  Then for any $k\in\cS_N$ and
$z^k\in N(z^*,\varepsilon_1)$, we have
\be\label{eq:quad}
\|z^{k+1}-z^*\|_2\leq c_7\|z^k-z^*\|_2^2,
\ee
where the constant $c_7:=c_0(c_2+(1+2c_0L\tau)\bar{\lambda}L)$.
\end{theorem}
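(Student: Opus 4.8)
The plan is to exploit that on a Newton step ($k\in\cK_N$) the update is simply $z^{k+1}=u^k=z^k+d^k$, and then to convert the quadratic estimate into a bound on a single matrix--vector product. First I would rewrite the inexact Newton equation $r^k=(J_k+\mu_kI)d^k+F^k$ as $d^k=(J_k+\mu_kI)^{-1}(r^k-F^k)$ and substitute it to obtain
\[
z^{k+1}-z^*=(J_k+\mu_kI)^{-1}\Big[(J_k+\mu_kI)(z^k-z^*)+r^k-F^k\Big].
\]
The whole argument then reduces to (a) controlling the operator norm of $(J_k+\mu_kI)^{-1}$ and (b) showing that the bracketed vector is $O(\|z^k-z^*\|_2^2)$.

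Step (a) is the step I expect to be the main obstacle, because the regularization term $\mu_kI$ must not spoil the nonsingularity bound of Lemma \ref{lem:bd}. Since $z^k\in N(z^*,\varepsilon_1)\subseteq N(z^*,\varepsilon_0)$, Lemma \ref{lem:bd}(i) gives $\|J_k^{-1}\|\le c_0$, and by Lemma \ref{lem:sdp-mono} the matrix $J_k$ is positive semidefinite, so $\iprod{x}{J_kx}\ge 0$ for all $x$. Because $\mu_k=\lambda_k\|F^k\|_2\ge 0$, for every $x$ one has
\[
\|(J_k+\mu_kI)x\|_2^2=\|J_kx\|_2^2+2\mu_k\iprod{x}{J_kx}+\mu_k^2\|x\|_2^2\ge\|J_kx\|_2^2,
\]
so the smallest singular value of $J_k+\mu_kI$ is at least that of $J_k$, whence $\|(J_k+\mu_kI)^{-1}\|\le\|J_k^{-1}\|\le c_0$.

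For step (b), I would split the bracket as $\big(J_k(z^k-z^*)-F^k\big)+\mu_k(z^k-z^*)+r^k$ and bound the three terms separately. Applying the strong semi-smoothness estimate \eqref{eq:semi} at base point $z^*$ with $d=z^k-z^*$ and $J_k\in\partial_BF(z^k)$, and using $F(z^*)=0$, gives $\|J_k(z^k-z^*)-F^k\|_2\le c_2\|z^k-z^*\|_2^2$. For the regularization term, Lemma \ref{lem:aux}(i)--(ii) yield $\mu_k=\lambda_k\|F^k\|_2\le\bar{\lambda} L\|z^k-z^*\|_2$, so $\|\mu_k(z^k-z^*)\|_2\le\bar{\lambda} L\|z^k-z^*\|_2^2$. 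For the residual, the inexactness criterion \eqref{eq:residual} gives $\|r^k\|_2\le\tau\lambda_k\|F^k\|_2\|d^k\|_2$, and feeding in $\lambda_k\le\bar{\lambda}$, $\|F^k\|_2\le L\|z^k-z^*\|_2$ and the step bound $\|d^k\|_2\le 2c_0L\|z^k-z^*\|_2$ of Lemma \ref{lem:aux}(iii) produces $\|r^k\|_2\le 2c_0L^2\tau\bar{\lambda}\|z^k-z^*\|_2^2$. Summing the three contributions bounds the bracket by $\big(c_2+(1+2c_0L\tau)\bar{\lambda} L\big)\|z^k-z^*\|_2^2$; multiplying by the factor $c_0$ from step (a) yields exactly \eqref{eq:quad} with $c_7=c_0(c_2+(1+2c_0L\tau)\bar{\lambda} L)$. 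The only prerequisite to verify is that all hypotheses of Lemma \ref{lem:aux} are in force, namely $z^k\in N(z^*,\varepsilon_1)$, which holds by assumption.
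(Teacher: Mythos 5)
Your proposal is correct and follows essentially the same route as the paper: the same three-way decomposition into the strong semi-smoothness term, the regularization term $\mu_k\|z^k-z^*\|_2$, and the inexact residual $r^k$, each bounded via Lemma \ref{lem:aux} exactly as in the paper's proof, yielding the same constant $c_7$. The only addition is that you explicitly justify $\|(J_k+\mu_kI)^{-1}\|\leq\|J_k^{-1}\|\leq c_0$ via the positive semidefiniteness of $J_k$, a step the paper uses without comment.
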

\begin{proof}
For a Newton step, we have
\[
\begin{array}{ll}
	\|z^{k+1}-z^*\|_2=\|z^k+d^k-z^*\|_2\\[4pt]
	=\|z^k+(J_k+\mu_kI)^{-1}(F^k+(J_k+\mu_kI)d^k-F^k)-z^*\|_2\\[4pt]
	\leq\|z^k-z^*-(J_k+\mu_kI)^{-1}F^k\|_2+\|(J_k+\mu_k)^{-1}\|\cdot
\|F^k+(J_k+\mu_kI)d^k\|_2\\[4pt]
    \leq\|(J_k+\mu_k I)^{-1}\|\cdot(\|F^k-F(z^*)-J_k(z^k-z^*)\|_2+\mu_k\|z^k-z^*\|_2+\|r^k\|_2)\\[4pt]
        \leq\|J_k^{-1}\|\cdot(\|F^k-F(z^*)-J_k(z^k-z^*)\|_2+\lambda_k\|F^k\|_2\|z^k-z^*\|_2+\tau\lambda_k\|F^k\|_2\|d^k\|_2)\\[4pt]
    \leq\|J_k^{-1}\|\cdot(c_2\|z^k-z^*\|_2^2+\lambda_k\|F^k\|_2\|z^k-z^*\|_2+\tau\lambda_k\|F^k\|_2\|d^k\|_2)\\[4pt]
    \leq c_0(c_2\|z^k-z^*\|_2^2+(1+2c_0L\tau)\lambda_k\|F^k\|_2\|z^k-z^*\|_2)\\[4pt]
    \leq c_0(c_2+(1+2c_0L\tau)\bar{\lambda}L)\|z^k-z^*\|_2^2,
\end{array}
\]	
where the third inequality is from the facts that $\mu_k=\lambda_k\|F^k\|_2$ and
$\|r^k\|_2\leq\tau\lambda_k\|F^k\|_2\|d^k\|_2$,  the fourth inequality uses
(\ref{eq:semi}), and the fifth inequality arises from item (iii) in Lemma \ref{lem:aux}.
\end{proof}

Based on Theorem \ref{th:quad}, a region is defined in the following corollary.
It is shown that, $\|F(u^k)\|_2\leq\nu\|F^k\|_2$ is always satisfied in this
region. 
\begin{corollary}\label{coro:region}
Under the conditions of Theorem \ref{th:quad}, for any $z^k\in N(z^*,\varepsilon_2)$ with $\varepsilon_2:=\min\{\varepsilon_1,\nu/(Lc_7\kappa)\}$, we have $\|F(u^k)\|_2\leq\nu\|F(z^k)\|_2$.	
\end{corollary}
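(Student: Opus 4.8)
The plan is to sandwich $\|F(u^k)\|_2$ between an upper bound coming from the quadratic contraction of the Newton step and a lower bound coming from the error bound condition, and then absorb the leftover factor into $\nu$ using the smallness of $\varepsilon_2$. First I would record that $u^k=z^k+d^k$ and that the estimate established in Theorem \ref{th:quad}, namely $\|z^k+d^k-z^*\|_2\le c_7\|z^k-z^*\|_2^2$, is derived purely from the regularized Newton system (\ref{eq:r-newton}) defining $d^k$ together with items (i) and (iii) of Lemma \ref{lem:aux} and Lemma \ref{lem:bd}; it never invokes the acceptance test (\ref{eq:zk}). Hence the bound is valid for \emph{every} $z^k\in N(z^*,\varepsilon_1)$, and in particular, since $N(z^*,\varepsilon_2)\subseteq N(z^*,\varepsilon_1)$, it yields $\|u^k-z^*\|_2\le c_7\|z^k-z^*\|_2^2$ for any $z^k\in N(z^*,\varepsilon_2)$.

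For the upper bound I would first verify that $u^k$ still lies in $N(z^*,\varepsilon_0)$, so that the Lipschitz estimate applies: combining item (ii) of Lemma \ref{lem:aux} with item (iii) of Lemma \ref{lem:bd} gives $\|z-z^*\|_2\le\kappa\|F(z)\|_2\le\kappa L\|z-z^*\|_2$, hence $\kappa L\ge 1$; then $\|u^k-z^*\|_2\le c_7\varepsilon_2^2\le c_7\cdot\tfrac{\nu}{Lc_7\kappa}\cdot\varepsilon_2=\tfrac{\nu}{L\kappa}\varepsilon_2\le\varepsilon_2\le\varepsilon_0$. Now Lemma \ref{lem:aux}(ii) applied at $u^k$ gives $\|F(u^k)\|_2\le L\|u^k-z^*\|_2\le Lc_7\|z^k-z^*\|_2^2$. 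For the lower bound, the error bound condition of Lemma \ref{lem:bd}(iii) applied at $z^k$ gives $\|z^k-z^*\|_2\le\kappa\|F(z^k)\|_2$, that is, $\|F(z^k)\|_2\ge\|z^k-z^*\|_2/\kappa$.

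Combining the two bounds, I would write $\|F(u^k)\|_2\le Lc_7\|z^k-z^*\|_2\cdot\|z^k-z^*\|_2\le Lc_7\kappa\|z^k-z^*\|_2\cdot\|F(z^k)\|_2$, where the last step used the error bound in the form $\|z^k-z^*\|_2\le\kappa\|F(z^k)\|_2$. Since $z^k\in N(z^*,\varepsilon_2)$ forces $\|z^k-z^*\|_2\le\varepsilon_2\le\nu/(Lc_7\kappa)$, the prefactor satisfies $Lc_7\kappa\|z^k-z^*\|_2\le\nu$, which delivers exactly $\|F(u^k)\|_2\le\nu\|F(z^k)\|_2$ and completes the argument.

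The main obstacle is conceptual rather than computational, and it lives entirely in the first paragraph: the corollary is precisely the statement deciding whether iteration $k$ qualifies as a Newton step, so one cannot invoke Theorem \ref{th:quad} as a property of \emph{accepted} Newton iterates without circularity. The resolution is to make explicit that the quadratic bound on $\|z^k+d^k-z^*\|_2$ is a statement about the trial point $u^k=z^k+d^k$ that depends only on the construction of $d^k$, independent of (\ref{eq:zk}). Once this is in place, the remaining estimates — the Lipschitz upper bound, the error bound lower bound, and the admissibility check $u^k\in N(z^*,\varepsilon_0)$ — are routine.
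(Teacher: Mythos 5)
Your proof is correct and follows essentially the same route as the paper's: the identical chain $\|F(u^k)\|_2\le L\|u^k-z^*\|_2\le Lc_7\|z^k-z^*\|_2^2\le Lc_7\kappa\,\varepsilon_2\,\|F(z^k)\|_2\le\nu\|F(z^k)\|_2$ combining the Lipschitz bound, Theorem \ref{th:quad}, and the error bound of Lemma \ref{lem:bd}(iii). The two points you add explicitly --- that the quadratic estimate is a property of the trial point $u^k$ independent of the acceptance test (avoiding circularity), and that $u^k$ remains in $N(z^*,\varepsilon_0)$ so the Lipschitz bound applies there --- are left implicit in the paper but do not change the argument.
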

\begin{proof}
Using the Lipschitz of $F$, Theorem \ref{th:quad} and item (iii) in Lemma
\ref{lem:bd}, we obtain
\[
\|F(u^k)\|_2\leq L\|z^k+d^k-z^*\|_2\leq Lc_7\|z^k-z^*\|_2^2\leq Lc_7\varepsilon_2\|z^k-z^*\|_2\leq \nu\|F(z^k)\|_2.
\]	
\end{proof}


It is clear that the BD-regular condition plays a key role in the above
discussion. Although the BD-regular condition is strong and may fail in some
situations, there are some possible ways to resolve this  issue. As is shown in
\cite[Section 4.2]{MU2014}, suppose that there exists a nonsingular element in
$\partial_BF(z^*)$ and other elements in $\partial_BF(z^*)$ may be singular. By
exploiting the  structure of $\partial_BF(z)$,   one can carefully choose a
nonsingular generalized Jacobian when $z$ is close enough to $z^*$. Hence, if
$z^*$ is isolated,  one can still obtain the fast local convergence results by a
similar proof as above.  Another way is inspired by the literature on
the Levenberg-Marquardt (LM) method. The LM method is a regularized Gauss-Newton
method to deal with some possibly singular systems. It has been shown in \cite{FY2005}  that
the LM method preserves a superlinear or quadratic  local
convergence rate under certain local error bound condition, which is weaker than the nonsingular
condition. Therefore, it remains a future research topic to investigate 
 local convergence of our algorithm under the local error bound condition.

 \subsection{Regularized L-BFGS method with projection steps}
In this subsection, we propose a regularized L-BFGS method with projection steps by
simply replace the Newton step in Algorithm \ref{algo:newton} with a regularized
L-BFGS step to avoid solving the linear system (\ref{eq:r-newton}). The L-BFGS
method is an adaption of the classical BFGS method, which tries to use a minimal
storage.  A globally convergent BFGS method with projection steps is proposed in
\cite{ZL2008} for solving smooth monotone equations. The convergence of our
regularized L-BFGS method  can be analyzed in a similar way as  our regularized
Newton method by combining the convergence analysis in \cite{ZL2008}. We
only describe the L-BFGS update in the following and omit the convergence analysis.

For an iterate $z^k$, we  compute the direction   by 
\be\label{eq:bfgsd}
(H_k + \mu_k I)d^k = -F^k, 
\ee
where  $H_k$ is the L-BFGS approximation to the Jacobian matrix.
 
Choosing an initial matrix $H_k^{0}$ and setting $\delta F^k = F^{k+1} - F^{k}$,
the Jacobian matrix can be approximated by the recent $m$ pairs $\{\delta F^i,
d^i\}, i = k-1, k -2, \cdots, k-m$, i.e., using the standard formula
\cite{NocedalWright06} as
\be\label{eq:bfgs}
H_{k} = H_k^{0} - 
\left[
\begin{matrix}
H_k^0\cD_k & \cF_k
\end{matrix}\right]
\left[
\begin{matrix}
\cD_k^TH_k^0\cD_k & L_k \\
L^T_k & -S_k
\end{matrix}
\right]^{-1}
\left[
\begin{matrix}
\cD_k^T (H_k^0)^T \\
 \cF_k^T
\end{matrix}\right],
\ee
where $\cD_k = [d^{k-m}, \cdots, d^{k-1} ]$, $\cF_k = [\delta F^{k-m}, \cdots,  \delta F^{k-1} ]$, $L_k$ is a lower-triangular matrix with entries
\[
(L_k)_{i,j} = 
\begin{cases}\label{eq:BFGS}
(d^{k-m-1+i})^T(\delta F^{k-m-1+j})&  \text{if} \quad i > j, \\
0 &\text{otherwise},
\end{cases}
\]
and $S_k$ is a diagonal matrix with entries
\[
(S_k)_{ii} = (d^{k-m-1+i})^T\delta F^{k-m-1+i}.
\]
Then we can compute the inverse regularized Jacobian matrix
\[
(H_k + \mu_k I)^{-1} = \bar{H}_k^{-1} + \bar{H}_k^{-1}C_kR_k^{-1}C_k^T(\bar{H}_k^T)^{-1},
\]
where $\bar{H}_k = H_k^0 + \mu_k I$, $C_k = [H_k^0\cD_k  \quad \cF_k]$, $R_k$ is defined by $R_k = V_k - C_k^T \bar{H}_k^{-1} C_k$ and 
\[
V_k = 
\left[
\begin{matrix}
	\cD_{k}^TH_k^0\cD_{k} & L_{k} \\
	L_{k}^T & -S_{k}
\end{matrix}
\right].
\]
Specifically, if $k$ is smaller than $m$, we use the classical BFGS method to
approximate inverse regularized Jacobian matrix, which just let $d^j = \delta
F^j= 0$ for $j < 0$ in the formula \eqref{eq:bfgs}. 

%
%
%
%

\section{Numerical Results}\label{sec:apps}
In this section, we conduct proof-of-concept numerical experiments on our proposed
 schemes for the fixed-point mappings induced from the FBS and DRS methods by
 applying them to $\ell_1$-norm minimization problem. 
All numerical experiments are performed in {\sc Matlab} 
on workstation with a  Intel(R) Xeon(R) CPU E5-2680 v3 and 128GB memory.

\subsection{Applications to the FBS method} 
Consider the $\ell_1$-regularized optimization problem of the form
\be \label{eq:log} \min \; \mu \|x\|_1 + h(x), \ee
where $h$ is continuously differentiable.  
Let  $f(x)=\mu \|x\|_1$.  The system of nonlinear
equations
corresponding to the FBS method is 
$ F(x) =x- \prox_{tf}(x- t \nabla h(x))=0.$
The generalized Jacobian matrix of $F(x)$ is 
\be J(x) =I- M(x)(I- t \partial^2 h(x)) ,\ee
where $M(x)\in \partial \prox_{tf}( x - t \nabla h(x))$ and $\partial^2 h(x)$ is
the generalized Hessian matrix of $h(x)$.
 Specifically, the proximal mapping corresponding to $f(x)$ is the so-called
shrinkage operator defined as
\[ \left( \prox_{tf}(x) \right)_i= \sign(x_i) \max(|x_i|-\mu t, 0).
\]
Hence, one can take a Jacobian matrix $M(x)$ which is a diagonal matrix with diagonal entries being
\[ \left(M(x) \right)_{ii}=
\begin{cases} 1, & \mbox{ if } |(x-t\nabla h(x))_i| >\mu t,\\
  0, & \mbox{ otherwise. }
  \end{cases}\]
Similar to \cite{MU2014}, we introduce the index sets
\[
\begin{split}
&\mathcal{I}(x) := \{i : |(x - t \nabla h(x))_i| >t\mu  \}  = \{i: \left(M(x) \right)_{ii} = 1\}, \\
&\mathcal{O}(x) := \{i : |(x - t \nabla h(x))_i| \leq t\mu  \}  = \{i:
\left(M(x) \right)_{ii} = 0\}.
\end{split}
\]
The Jacobian matrix can be represented by
\[
J(x) = 
\left(
\begin{matrix}
t(\partial^2h(x))_{\mathcal{I}(x)\mathcal{I}(x)} & t(\partial^2h(x))_{\mathcal{I}(x)\mathcal{O}(x)} \\
0 & I 
\end{matrix}
\right).
\]
Using the above special structure of Jacobian matrix $J(x)$, we can reduce the
complexity of the regularized Newton step \eqref{eq:r-newton}. Let $\mathcal{I}
= \cI(x^k)$ and $\cO = \cO(x^k)$. Then,  we have 
\[\begin{split}
&(1 + \mu_k)s^k_{\cO} = -  F_{k, \cO} ,\\
&(t(\partial^2h(x))_{\cI\cI} + \mu I) s^k_{\cI} +  t(\partial^2h(x))_{\cI\cO} s^k_{\cO} = - F_{k,\cI},
\end{split}\]
which yields
\[\begin{split}
&s^k_{\cO} = -  \frac{1}{1 + \mu_k}F_{k, \cO}, \\
&(t(\partial^2h(x))_{\cI\cI} + \mu I) s^k_{\cI}  = - F_{k,\cI} - t(\partial^2h(x))_{\cI\cO} s^k_{\cO} .
\end{split}\]

\subsubsection{Numerical comparison} 
In this subsection, we compare our proposed methods with different solvers for
solving problem (\ref{eq:log}) with $h(x)=\frac{1}{2} \|Ax-b\|_2^2$. The solvers
used for comparison include ASSN, SSNP, ALSB, FPC-AS \cite{WYGZ2010}, SpaRSA
\cite{WNF2009} and SNF \cite{MU2014}. ASSN is the proposed semi-smooth Newton
method  with projection steps (Algorithm \ref{algo:newton} ) and SSNP is the
method which only uses the projection steps. ASLB(i) is a variant of the line
search based method by combining the L-BFGS method and hyperplane projection
technique. The number in bracket is the size of memory. FPC-AS is a first-order
method that uses a fixed-point iteration under Barzilai-Borwein steps and
continuation strategy. SpaRSA resembles FPC-AS, which is also a first-order
methods and uses Barzilai-Borwein steps and continuation strategy. SNF is a
semi-smooth Newton type method which uses the filter strategy and is one of
state-of-the-art second-order methods for $\ell_1$-regularized optimization problem
\eqref{eq:log} and SNF(aCG) is the SNF solver with an adaptive parameter strategy in the
conjugate gradient method. The parameters of FPC-AS, SpaRSA and SNF are the same as \cite{MU2014}.

The test problems are from \cite{MU2014}, which are constructed as follows. Firstly, we randomly generate a sparse solution $\bar{x} \in \mathbb{R}^n$ with $k$ nonzero entries, where $n = 512^2 = 262144$ and $k = [n/40] = 5553$. The $k$ different indices are uniformly chosen from $\{1,2,\cdots,n\}$ and we set the magnitude of each nonzero element by $\bar{x}_i = \eta_1(i)10^{d \eta_2(i)/20}$, where $\eta_1(i)$ is randomly chosen from $\{-1,1\}$ with probability $1/2$, respectively, $\eta_2(i)$ is uniformly distributed in $[0,1]$ and $d$ is a dynamic range which can influence the efficiency of the solvers. Then we choose $m = n/8 = 32768$ random cosine measurements, i.e., $Ax = (\dct(x))_J$, where $J$ contains $m$ different indices randomly chosen form  $\{1,2,\cdots,n\}$ and $\dct$ is the discrete cosine transform. Finally, the input data is specified by $b = A\bar{x} + \epsilon$, where $\epsilon$ is a Gaussian noise with a standard deviation $\bar{\sigma} = 0.1$. 

To compare fairly, we set an uniform stopping criterion. 
For a certain tolerance $\epsilon$, we obtain a solution $x_{newt}$ using ASSN such that $\|F(x_{newt})\|\le \epsilon$. Then we terminate all methods by the relative criterion
\[
\frac{f(x^k) - f(x^{*})}{\max\{f(x^*),1\}} \leq \frac{f(x_{newt}) -
f(x^{*})}{\max\{f(x^*),1\}},
\]
where $f(x)$ is the objective function and $x^{*}$ is a highly accurate solution obtained by ASSN under the criterion $||F(x)|| \leq 10^{-14}$.

\begin{table}[htb]
\caption{Total number of $A$- and $A^T$-  calls $N_A$ and CPU time (in seconds) averaged over 10 independent runs with dynamic range 20 dB}\label{tab:db20}
\setlength{\tabcolsep}{5pt}
\centering
\begin{tabular}{ccccccccccc}
\hline
method & \multicolumn{2}{c}{$\epsilon : 10^{0}$} &  \multicolumn{2}{c}{$\epsilon : 10^{-1}$} &  \multicolumn{2}{c}{$\epsilon : 10^{-2}$} & \multicolumn{2}{c}{$\epsilon : 10^{-4}$} &
\multicolumn{2}{c}{$\epsilon : 10^{-6}$}  \\ 
 \cline{2-3}  \cline{4-5}  \cline{6-7}   \cline{8-9}   \cline{10-11}  
& time & $N_A$ & time & $N_A$ & time & $N_A$ &  time & $N_A$ & time & $N_A$ \\ \hline
SNF& 1.12 & 84.6 & 2.62 & 205 & 3.19 & 254.2 & 3.87 & 307 & 4.5 & 351   \\ 
SNF(aCG)& 1.11 & 84.6 & 2.61 & 205 & 3.19 & 254.2 & 4.19 & 331.2 & 4.3 & 351.2   \\ 
ASSN& 1.15 & 89.8 & 1.81 & 145 & 2.2 & 173 & 3.15 & 246.4 & 3.76 & 298.2   \\ 
SSNP& 2.52 & 199 & 5.68 & 455.6 & 8.05 & 649.4 & 20.7 & 1679.8 & 29.2 & 2369.6   \\ 
ASLB(2)& 0.803 & 57 & 1.35 & 98.4 & 1.66 & 121 & 2.79 & 202.4 & 3.63 & 264.6   \\ 
ASLB(1)& 0.586 & 42.2 & 1.01 & 71.6 & 1.29 & 92 & 2.54 & 181.4 & 3.85 & 275   \\ 
FPC-AS& 1.45 & 109.8 & 5.03 & 366 & 7.08 & 510.4 & 10 & 719.8 & 10.3 & 743.6   \\ 
SpaRSA& 5.46 & 517.2 & 5.54 & 519.2 & 5.9 & 539.8 & 6.75 & 627 & 9.05 & 844.4   \\ 
\hline
\end{tabular}
\end{table}

\begin{table}[htb]
\caption{Total number of $A$- and $A^T$-  calls $N_A$ and CPU time (in seconds) averaged over 10 independent runs with dynamic range 40 dB}\label{tab:db40}
\setlength{\tabcolsep}{5pt}
\centering
\begin{tabular}{ccccccccccc}
\hline
method & \multicolumn{2}{c}{$\epsilon : 10^{0}$} &  \multicolumn{2}{c}{$\epsilon : 10^{-1}$} &  \multicolumn{2}{c}{$\epsilon : 10^{-2}$} & \multicolumn{2}{c}{$\epsilon : 10^{-4}$} &
\multicolumn{2}{c}{$\epsilon : 10^{-6}$}  \\ 
 \cline{2-3}  \cline{4-5}  \cline{6-7}   \cline{8-9}   \cline{10-11}  
& time & $N_A$ & time & $N_A$ & time & $N_A$ &  time & $N_A$ & time & $N_A$ \\ \hline
SNF& 2.12 & 158.2 & 4.85 & 380.8 & 6.07 & 483.2 & 6.8 & 525 & 7.2 & 562.4   \\ 
SNF(aCG)& 2.07 & 158.2 & 4.84 & 380.8 & 6.1 & 483.2 & 7.1 & 553.6 & 7.22 & 573.6   \\ 
ASSN& 2.34 & 182.2 & 3.67 & 285.4 & 4.29 & 338.6 & 5.11 & 407 & 5.92 & 459.2   \\ 
SSNP& 6.05 & 485.6 & 12.3 & 978.6 & 19.5 & 1606.6 & 27.3 & 2190.8 & 37.1 & 2952.2   \\ 
ASLB(2)& 1.39 & 98.2 & 2.19 & 154.4 & 2.64 & 194 & 3.45 & 250.4 & 4.49 & 323.6   \\ 
ASLB(1)& 1.25 & 86.8 & 1.84 & 127.4 & 2.2 & 161.6 & 3.2 & 225.6 & 4.59 & 319.2   \\ 
FPC-AS& 2.08 & 158 & 5.31 & 399.4 & 7.8 & 578.6 & 10.1 & 720.4 & 10.5 & 775   \\ 
SpaRSA& 5.56 & 523.4 & 5.56 & 530 & 6.27 & 588.2 & 7.45 & 671.6 & 8.11 & 759.6   \\ 
\hline
\end{tabular}
\end{table}

\begin{table}[htb]
\caption{Total number of $A$- and $A^T$-  calls $N_A$ and CPU time (in seconds) averaged over 10 independent runs with dynamic range 60 dB}\label{tab:db60}
\setlength{\tabcolsep}{4.7pt}
\centering
\begin{tabular}{ccccccccccc}
\hline
method & \multicolumn{2}{c}{$\epsilon : 10^{0}$} &  \multicolumn{2}{c}{$\epsilon : 10^{-1}$} &  \multicolumn{2}{c}{$\epsilon : 10^{-2}$} & \multicolumn{2}{c}{$\epsilon : 10^{-4}$} &
\multicolumn{2}{c}{$\epsilon : 10^{-6}$}  \\ 
 \cline{2-3}  \cline{4-5}  \cline{6-7}   \cline{8-9}   \cline{10-11}  
& time & $N_A$ & time & $N_A$ & time & $N_A$ &  time & $N_A$ & time & $N_A$ \\ \hline
SNF& 5.66 & 391.8 & 9.31 & 648.8 & 11.1 & 777.6 & 11.8 & 828.2 & 12.5 & 876.6   \\ 
SNF(aCG)& 5.62 & 391.8 & 9.28 & 648.8 & 11 & 777.6 & 12.2 & 861.2 & 12.7 & 889   \\ 
ASSN& 3.92 & 295.4 & 5.38 & 416.4 & 6.45 & 492 & 7.49 & 582.4 & 8.19 & 642.4   \\ 
SSNP& 21.5 & 1607.2 & 29.5 & 2247.6 & 32.2 & 2478.8 & 41.9 & 3236.2 & 50.9 & 3927.4   \\ 
ASLB(2)& 2.11 & 146.2 & 2.89 & 201.6 & 3.54 & 250.6 & 4.5 & 317.6 & 5.42 & 383.4   \\ 
ASLB(1)& 2.11 & 143.8 & 2.66 & 187.8 & 3.25 & 228.2 & 4.22 & 295 & 5.22 & 368.6   \\ 
FPC-AS& 3.02 & 232.2 & 8.84 & 644 & 11.5 & 844.2 & 13.8 & 1004.2 & 14.6 & 1031.8   \\ 
SpaRSA& 6.01 & 561.2 & 6.39 & 598.2 & 7.27 & 683.2 & 8.25 & 797.8 & 9.84 & 900.6   \\ 
\hline
\end{tabular}
\end{table}

\begin{table}[!htb]
\caption{Total number of $A$- and $A^T$-  calls $N_A$ and CPU time (in seconds) averaged over 10 independent runs with dynamic range 80 dB}\label{tab:db80}
\setlength{\tabcolsep}{4.8pt}
\centering
\begin{tabular}{ccccccccccc}
\hline
method & \multicolumn{2}{c}{$\epsilon : 10^{0}$} &  \multicolumn{2}{c}{$\epsilon : 10^{-1}$} &  \multicolumn{2}{c}{$\epsilon : 10^{-2}$} & \multicolumn{2}{c}{$\epsilon : 10^{-4}$} &
\multicolumn{2}{c}{$\epsilon : 10^{-6}$}  \\ 
 \cline{2-3}  \cline{4-5}  \cline{6-7}   \cline{8-9}   \cline{10-11}  
& time & $N_A$ & time & $N_A$ & time & $N_A$ &  time & $N_A$ & time & $N_A$ \\ \hline
SNF& 7.47 & 591 & 10.7 & 841.6 & 12.4 & 978.6 & 13 & 1024.8 & 13.6 & 1057.8   \\ 
SNF(aCG)& 7.56 & 591 & 10.6 & 841.6 & 12.4 & 978.6 & 13.2 & 1042.2 & 13.9 & 1099.4   \\ 
ASSN& 6.39 & 482.8 & 7.66 & 601 & 8.66 & 690.6 & 9.9 & 780.6 & 10.5 & 833.4   \\ 
SSNP& 36.1 & 2820.6 & 34.2 & 2767.2 & 42.7 & 3497 & 51.3 & 4201.4 & 56.6 & 4531.2   \\ 
ASLB(2)& 3.65 & 255.8 & 4.03 & 299.4 & 4.98 & 355.6 & 5.61 & 411.4 & 6.21 & 440   \\ 
ASLB(1)& 3.02 & 213.6 & 3.59 & 258 & 4.24 & 299.2 & 4.95 & 357.6 & 5.52 & 385.6   \\ 
FPC-AS& 4.16 & 321.2 & 8.18 & 611.4 & 10.7 & 788.4 & 12.1 & 886 & 12.1 & 900.8   \\ 
SpaRSA& 5.74 & 543.2 & 6.96 & 665.4 & 8.17 & 763.2 & 9.1 & 873.6 & 9.85 & 930.2   \\ 
\hline
\end{tabular}
\end{table}

We solve the test problems under different tolerances $\epsilon \in \{10^{-0},
10^{-1},10^{-2},10^{-4},10^{-6} \}$ and dynamic ranges $d \in \{20,40,60,80 \}$.
Since the evaluations of $\dct$ dominate the overall computation, we mainly use
the total numbers of $A$- and $A^T$-  calls $N_A$ to compare the efficiency of
different solvers.  Tables \ref{tab:db20} - \ref{tab:db80} show the averaged
numbers of $N_A$ and CPU time over 10 independent trials. These tables show that
ASSN and ASLB are competitive to other methods. For the low accuracy, SpaRSA and
FPC-AS show a fast convergence rate. ASSN and ASLB are both faster than or close
to FPC-AS and SpaRSA regardless of $N_A$ and CPU time in most cases. In the
meanwhile, ASSN and ASLB are competitive to the second-order methods under
moderate accuracy. The CPU time and $N_A$ of ASSN and ASLB are less than the
Newton type solver SNF in almost all cases, especially for the large dynamic
range. ASLB with a memory size $m = 1$ shows the fastest speed in low accuracy.
It is necessary to emphasize that L-BFGS with $m = 1$ is equal to the
Hestenes-Stiefel and Polak-Ribi\`ere conjugate gradient method with exact line
search \cite{NocedalWright06}. Compared with ASSN, SSNP has a slower convergent rate, which implies that our adaptive strategy on switching Newton and projection steps is helpful.

\begin{figure}[htb]
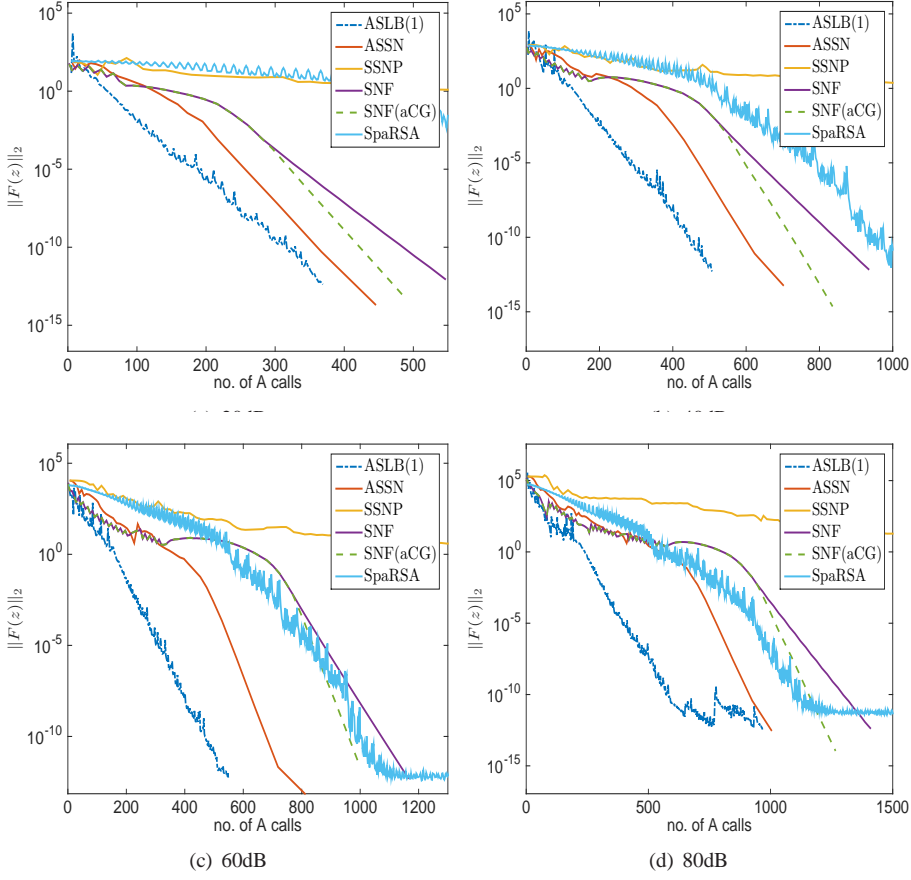

\centering
  \subfigure[20dB]{
    \includegraphics[width=0.45\textwidth,height=0.4\textwidth]
    {Acall_res_dyna20.eps}
  }
    \subfigure[40dB]{
    \includegraphics[width=0.45\textwidth,height=0.4\textwidth]
    {Acall_res_dyna40.eps}
  }
  \subfigure[60dB]{
    \includegraphics[width=0.45\textwidth,height=0.4\textwidth]
    {Acall_res_dyna60.eps}
  }
  \subfigure[80dB]{
    \includegraphics[width=0.45\textwidth,height=0.4\textwidth]
    {Acall_res_dyna80.eps}
  }
\caption{residual history  with respect to the total numbers of $A$- and $A^T$-  calls $N_A$}
\label{fig:Acall-res}
\end{figure}

\begin{figure}[htb]
\centering
  \subfigure[20dB]{
    \includegraphics[width=0.45\textwidth,height=0.4\textwidth]
    {iter_res_dyna20.eps}
  }
    \subfigure[40dB]{
    \includegraphics[width=0.45\textwidth,height=0.4\textwidth]
    {iter_res_dyna40.eps}
  }
  \subfigure[60dB]{
    \includegraphics[width=0.45\textwidth,height=0.4\textwidth]
    {iter_res_dyna60.eps}
  }
  \subfigure[80dB]{
    \includegraphics[width=0.45\textwidth,height=0.4\textwidth]
    {iter_res_dyna80.eps}
  }
\caption{residual history  with respect to the total numbers of iteration}
\label{fig:iter-res}
\end{figure}

In particular, ASSN and ASLB have a better performance for high accuracy.
Figures \ref{fig:Acall-res} and \ref{fig:iter-res} illustrate the residual
history with respect to the total number of $A$- and $A^T$-  calls $N_A$ and
the total number of iterations. Since two first-order methods have a close
performance and ASLB(1) performs better than ASLB(2), we omit the the figure of
FPC-AS and ASLB(2). These figures also show  that ASSN and ASLB have a better
performance than SNF and SNF(aCG) independent of dynamic ranges. In particular,
quadratic convergence is observable from ASSN in these examples.

\subsection{Applications to the DRS method} 


Consider the Basis-Pursuit (BP) problem
\be \label{eq:BPDe} \min \; \|x\|_1, \st Ax=b,\ee
where $A\in \R^{m\times n}$ is of full row rank and $b\in \R^m$.
Let $f(x)=\ind_\Omega(Ax-b)$ and $h(x)=\|x\|_1$, where  the
set $\Omega=\{0\}$. 
The system of nonlinear equations corresponding to the
DRS fixed-point mapping is  
\begin{equation}\label{eq:F-BP}
F(z) = \prox_{th}(z)-\prox_{tf}(2\prox_{th}(z)-z)=0.
\end{equation}

For the simplicity of solving the subproblems in the DRS method, we make the
assumption that $AA^\top  =I$.  Then it can be derived that  the proximal mapping with respect
to $f(x)$ is 
\beaa \prox_{tf}(z)&=&  (I-A^\top A) z + A^\top
\left(\prox_{\ind_\Omega}(Az-b)+b\right)\\
&=& 
z- A^\top (A z-b).
\eeaa
A generalized Jacobian matrix $D \in\partial \prox_{tf}((2\prox_{th}(z)-z))$ is taken as follows
\bea 
D
&=&  I- A^\top A.
\eea
The proximal mapping with respect to $h(x)$ is
\[ \left( \prox_{th}(z) \right)_i= \sign(z_i) \max(|z_i|-t, 0).
\]
One can take a generalized Jacobian matrix $M(z)\in \partial \prox_{th}(z)$ as a diagonal matrix with diagonal entries
\[
M_{ii}(z)=\left\{
\begin{array}{ll}
  1,\quad & |(z)_i|>t,\\
0,\quad & \mbox{ otherwise}.\\
\end{array}
\right.
\]
Hence, a generalized Jacobian matrix of $F(z)$
 is  in the form of 
\be \label{eq:G-l1}
J(z)= M(z) + \bA(I-2M(z)).
\ee
Let $W=(I-2M(z))$ and $H=W+M(z)+\mu I$. Using the binomial inverse theorem, we
obtain the inverse matrix
\beaa
(J(z)+\mu I)^{-1} &=& (H - A^\top A W)^{-1} \\
&=&  H^{-1} +  H^{-1} A^\top (I- AW  H^{-1}A^\top)^{-1} A WH^{-1}.
\eeaa
For convenience, we write the  diagonal entries of matrix $W$ and $H$ as
\[
W_{ii}(z)=\left\{
\begin{array}{ll}
-1,\quad & |(z)_i|>t,\\
1,\quad & \mbox{ otherwise}\\
\end{array}
\right.
\text{and} \quad
H_{ii}(z)=\left\{
\begin{array}{ll}
\mu,\quad & |(z)_i|>t,\\
1+\mu,\quad & \mbox{ otherwise}.\\
\end{array}
\right.
\]
Then $WH^{-1} = \frac{1}{1+\mu}I - S$, where $S$ is a diagonal matrix with diagonal entries
\[
S_{ii}(z)=\left\{
\begin{array}{ll}
\frac{1}{\mu} + \frac{1}{1+\mu},\quad & |(z)_i|>t,\\
0,\quad & \mbox{ otherwise}.\\
\end{array}
\right.
\]
Hence, $I- AW  H^{-1}A^\top = (1-\frac{1}{1+\mu})I + ASA^\top$. Define the index sets
\[
\begin{split}
&\mathcal{I}(x) := \{i : |(z)_i| >t  \}  = \{i: M_{ii}(x) = 1\}, \\
&\mathcal{O}(x) := \{i : |(z)_i| \leq t  \}  = \{i: M_{ii}(x) = 0\}
\end{split}
\]
and $A_{\mathcal{I}(x)}$ denote the matrix containing the column $\mathcal{I}(x)$ of A, then we have 
\be\label{eq:spa-drs}
ASA^\top = (\frac{1}{\mu} + \frac{1}{1+\mu})A_{\mathcal{I}(x)}A_{\mathcal{I}(x)}^\top.
\ee
The above property  implies the positive definiteness of $I- AW  H^{-1}A^\top$ and can be used to reduce the computational complexity if the submatrix $A_{\mathcal{I}(x)} A_{\mathcal{I}(x)} ^\top$ is easily available.


\subsubsection{Numerical comparison} 
In this subsection, we compare our methods with two first-order solvers: ADMM
\cite{YF2001} and SPGL1 \cite{vdBF2008}.
 The ASLB solver is not included since its performance is not comparable with other approaches. 
 Our test problems are almost the same as
the last subsection and the only difference is that we set $b = A\bar{x}$ without
adding noise. We use the residual criterion  $\|F(z)\|\le \epsilon$ as the
stopping criterion for ADMM and ASSN. Because the computation of residual of
SPGL1 needs extra cost, we use its original criterion and list the relative
error ``rerr" to compare with ADMM and ASSN. The relative error with respect to the true solution $x^*$ is denoted by  
\[
\mbox{rerr} = \frac{||x^k - x^*||}{\max(||x^*||,1)}.
\]
We revise the ADMM in yall1\footnote{downloadable from
\url{http://yall1.blogs.rice.edu}} by adjusting the rules of updating the
penalty parameter and choosing the best parameters so that it can solve all
examples in our numerical experiments. The parameters are set to the default
values in SPGL1. Since the matrix $A$ is only available as an operator, the property \eqref{eq:spa-drs} cannot be applied in ASSN. 
\begin{table}[htb]
\caption{Total number of $A$- and $A^T$-  calls $N_A$, CPU time (in seconds) and relative error with dynamic range 20 dB}\label{tab:drs20}
\setlength{\tabcolsep}{4.8pt}
\begin{tabular}{cccccccccc}
\hline
method & \multicolumn{3}{c}{$\epsilon : 10^{-2}$} &  \multicolumn{3}{c}{$\epsilon : 10^{-4}$} &  \multicolumn{3}{c}{$\epsilon : 10^{-6}$}  \\ 
\cline{2-4}  \cline{5-7}  \cline{8-10}   
& time & $N_A$ & rerr & time & $N_A$ & rerr & time & $N_A$ & rerr  \\ \hline
ADMM& 10.9 & 646 & 2.781e-04 & 14 & 1026 & 2.658e-06 & 19.4 & 1438 & 2.467e-08 \\ 
ASSN& 8.58 & 694 & 1.175e-04 & 9.73 & 734 & 2.811e-06 & 10.7 & 813 & 4.282e-09 \\ 
SPGL1& 17.3 & 733 & 2.127e-01 & 54.4 & 2343 & 2.125e-01 & 72.3 & 3232 & 2.125e-01 \\ 
\hline
\end{tabular}
\end{table}

\begin{table}[htb]
\caption{Total number of $A$- and $A^T$-  calls $N_A$, CPU time (in seconds) and relative error with dynamic range 40 dB}\label{tab:drs40}
\setlength{\tabcolsep}{4.8pt}
\begin{tabular}{cccccccccc}
\hline
method & \multicolumn{3}{c}{$\epsilon : 10^{-2}$} &  \multicolumn{3}{c}{$\epsilon : 10^{-4}$} &  \multicolumn{3}{c}{$\epsilon : 10^{-6}$}  \\ 
\cline{2-4}  \cline{5-7} \cline{8-10}  
& time & $N_A$ & rerr & time & $N_A$ & rerr & time & $N_A$ & rerr  \\ \hline
ADMM& 6.92 & 504 & 2.092e-04 & 12 & 875 & 2.623e-06 & 17.3 & 1306 & 2.926e-08 \\ 
ASSN& 5.79 & 469 & 7.595e-05 & 7.19 & 582 & 8.922e-07 & 8.43 & 632 & 2.006e-08 \\ 
SPGL1& 29.8 & 1282 & 2.350e-02 & 58.5 & 2477 & 2.346e-02 & 68.1 & 2910 & 2.346e-02 \\ 
\hline
\end{tabular}
\end{table}

\begin{table}[!htb]
\caption{Total number of $A$- and $A^T$-  calls $N_A$, CPU time (in seconds) and relative error with dynamic range 60 dB}\label{tab:drs60}
\setlength{\tabcolsep}{4.8pt}
\begin{tabular}{cccccccccc}
\hline
method & \multicolumn{3}{c}{$\epsilon : 10^{-2}$} &  \multicolumn{3}{c}{$\epsilon : 10^{-4}$} &  \multicolumn{3}{c}{$\epsilon : 10^{-6}$}  \\ 
\cline{2-4}  \cline{5-7} \cline{8-10}  
& time & $N_A$ & rerr & time & $N_A$ & rerr & time & $N_A$ & rerr  \\ \hline
ADMM& 7.44 & 599 & 1.901e-03 & 13.5 & 980 & 2.501e-06 & 18.7 & 1403 & 2.913e-08 \\ 
ASSN& 5.48 & 449 & 1.317e-03 & 9.17 & 740 & 1.922e-06 & 10.2 & 802 & 1.930e-08 \\ 
SPGL1& 55.3 & 2367 & 5.020e-03 & 70.7 & 2978 & 5.017e-03 & 89.4 & 3711 & 5.017e-03 \\ 
\hline
\end{tabular}
\end{table}

\begin{table}[!htb]
\caption{Total number of $A$- and $A^T$-  calls $N_A$, CPU time (in seconds) and relative error with dynamic range 80 dB}\label{tab:drs80}
\setlength{\tabcolsep}{4.8pt}
\begin{tabular}{cccccccccc}
\hline
method & \multicolumn{3}{c}{$\epsilon : 10^{-2}$} &  \multicolumn{3}{c}{$\epsilon : 10^{-4}$} &  \multicolumn{3}{c}{$\epsilon : 10^{-6}$}  \\ 
\cline{2-4}  \cline{5-7} \cline{8-10}  
& time & $N_A$ & rerr & time & $N_A$ & rerr & time & $N_A$ & rerr  \\ \hline
ADMM& 7.8 & 592 & 5.384e-04 & 13.8 & 1040 & 2.481e-06 & 17.7 & 1405 & 2.350e-08 \\ 
ASSN& 4.15 & 344 & 5.194e-04 & 7.92 & 618 & 1.205e-06 & 8.74 & 702 & 5.616e-09 \\ 
SPGL1& 32.2 & 1368 & 4.862e-04 & 56.1 & 2396 & 4.859e-04 & 67.4 & 2840 & 4.859e-04 \\ 
\hline
\end{tabular}
\end{table}

\begin{figure}[htb]
\centering
  \subfigure[20dB]{
    \includegraphics[width=0.45\textwidth,height=0.4\textwidth]
    {drs_Acall_res_dyna20.eps}
  }
    \subfigure[40dB]{
    \includegraphics[width=0.45\textwidth,height=0.4\textwidth]
    {drs_Acall_res_dyna40.eps}
  }
  \subfigure[60dB]{
    \includegraphics[width=0.45\textwidth,height=0.4\textwidth]
    {drs_Acall_res_dyna60.eps}
  }
  \subfigure[80dB]{
    \includegraphics[width=0.45\textwidth,height=0.4\textwidth]
    {drs_Acall_res_dyna80.eps}
  }
\caption{residual history  with respect to the total numbers of $A$- and $A^T$-  calls $N_A$}
\label{fig:Acall-res-drs}
\end{figure}

\begin{figure}[htb]
\centering
  \subfigure[20dB]{
    \includegraphics[width=0.45\textwidth,height=0.4\textwidth]
    {drs_iter_res_dyna20.eps}
  }
    \subfigure[40dB]{
    \includegraphics[width=0.45\textwidth,height=0.4\textwidth]
    {drs_iter_res_dyna40.eps}
  }
  \subfigure[60dB]{
    \includegraphics[width=0.45\textwidth,height=0.4\textwidth]
    {drs_iter_res_dyna60.eps}
  }
  \subfigure[80dB]{
    \includegraphics[width=0.45\textwidth,height=0.4\textwidth]
    {drs_iter_res_dyna80.eps}
  }
\caption{residual history with respect to the total numbers of iteration}
\label{fig:iter-res-drs}
\end{figure}

We solve the test problems under different tolerances $\epsilon \in
\{10^{-2},10^{-4},10^{-6} \}$ and dynamic ranges $d \in \{20,40,60,80 \}$.
Similar to the last subsection, we mainly use the total numbers of $A$- and $A^T$-  calls $N_A$ and CPU time to compare the efficiency among different solvers. We also list the relative error so that we can compare ADMM, ASSN with SPGl1. These numerical results are reported in Tables \ref{tab:drs20} - \ref{tab:drs80}. 
The performance of ASSN is close to ADMM for tolerance $10^{-2}$ and is much
better for tolerance $10^{-4}$ and $10^{-6}$ independent of dynamic ranges. For
all test problems, SPGL1 can only obtain a low accurate solution. It may be
improved if the parameters are further tuned. 

Figures \ref{fig:Acall-res-drs} and \ref{fig:iter-res-drs} illustrate the
residual history with respect to the total number of $A$- and $A^T$-  calls
$N_A$ and the total number of iterations. SPGL1 is omitted since it cannot
converge for a high accuracy. The figures show that ASSN has a similar
convergent rate as ADMM in the initial stage but it achieves a faster convergent
rate later, in particular, for a high accuracy. 

\section{Conclusion}\label{sec:conclusion}
 The purpose of this paper is to study second-order type methods  for solving
 composite convex programs based on fixed-point mappings induced from many
 operator splitting approaches such as the FBS and DRS methods. The
 semi-smooth  Newton method is theoretically guaranteed to converge to a global
 solution from an arbitrary initial point and achieve a fast convergent rate by using an adapt strategy on switching the projection steps and Newton steps. 
 Our proposed algorithms are suitable to constrained convex programs when a fixed-point mapping is well-defined. It may be able to bridge the gap between first-order and second-order type methods. They are indeed promising from our preliminary numerical experiments on a number of applications. In particular, quadratic or superlinear convergence is attainable in some examples of  Lasso regression and basis pursuit. 

There are a number of future directions worth pursuing from this point on, including theoretical analysis and a comprehensive implementation of these second-order algorithms. To improve the performance in practice, the second-order methods can be activated until the first-order type methods reach a good neighborhood of the global optimal solution. Since solving the corresponding system of linear equations is computationally dominant, it is important to explore the structure of the linear system and design certain suitable preconditioners. 
  
\section*{Acknowledgements} 
The authors would like to thank Professor Defeng Sun for the valuable
discussions on semi-smooth Newton methods, and Professor Michael Ulbrich and Dr.  Andre Milzarek for sharing their code SNF. 

\bibliographystyle{siam}
\bibliography{ref.bib}

\end{document}